\documentclass[12pt]{amsart}
\usepackage[utf8]{inputenc}
\usepackage[english]{babel}
\usepackage{tikz-cd}
\usepackage{verbatim}
\usepackage[margin=1in,top=0.95in, bottom=0.95in, footskip=0.25in]{geometry}
\usepackage{lipsum}
\usepackage{amsmath,amsthm,amssymb,mathrsfs}
\usepackage{thm-restate}
\definecolor{vividburgundy}{rgb}{0.62, 0.11, 0.21}
\usepackage[driverfallback=hypertex,pagebackref=false,colorlinks,citecolor=vividburgundy]{hyperref}
\usepackage[capitalize]{cleveref}
\usepackage{tikz-cd}
\usepackage{tzplot}
\tikzset{down/.style={anchor=south, rotate=-45, inner sep = -.5mm}}
\tikzset{up/.style={anchor=south, rotate=45, inner sep=-.5mm}}

\newtheorem{theorem}{Theorem}[section]

\newtheorem{proposition}[theorem]{Proposition}

\newtheorem{problem}[theorem]{Problem}

\newtheorem{lemma}[theorem]{Lemma}
\newtheorem{conjecture}[theorem]{Conjecture}

\theoremstyle{definition}
\newtheorem{definition}[theorem]{Definition}
\newenvironment{prf}{\noindent{\bf Proof.~}}{\(\qed\)}
\newcommand{\BPF}{\begin{prf}} 
\newcommand {\EPF}{\end{prf}}


\newcommand{\Q}{\mathbb{Q}}
\newcommand{\Z}{\mathbb{Z}}
\newcommand{\R}{\mathbb{R}}
\newcommand{\C}{\mathbb{C}}
\newcommand{\rK}{\mathcal{K}}
\newcommand{\rL}{\mathcal{L}}
\newcommand{\rM}{\mathcal{M}}

\newcommand{\rO}{\mathcal{O}}

\newcommand{\Ind}{\text{Ind}}

\title[Brauer-Siegel theorem for families of number fields over almost $S_n$ fields]{Brauer-Siegel theorem for families of number fields over almost $S_n$ fields}

\author{Anup B. Dixit}

\address{Department of Mathematics\\ Institute of Mathematical Sciences (HBNI)\\ CIT Campus, IV Cross Road\\ Chennai\\ India-600113}
\email{anupdixit@imsc.res.in}

\keywords{Brauer-Siegel theorem, Asymptotically exact families, $S_n$ fields,  class number}

\subjclass[2020]{11M41, 11R42}

\begin{document}

\begin{abstract}
    The classical Brauer-Siegel conjecture describes the asymptotic behavior of the product of the class number and the regulator in families of number fields. All known cases of the conjecture rely on reducing the problem, via group-theoretic methods, to Siegel's theorem for quadratic fields over $\mathbb{Q}$ or over a fixed base field. In this paper, we establish a new form of descent for the Brauer–Siegel conjecture. We show that if the conjecture holds for a family of almost $S_n$-fields, it necessarily holds for all quadratic extensions over that family, under mild conditions. This result may be viewed as an analogue of Siegel's theorem in which the base field is allowed to vary. In addition, we also establish the generalized Brauer-Siegel conjecture as formulated by Tsfasman-Vl\u{a}du\c{t} for asymptotically good towers of number fields over a family of almost $S_{n}$-fields.

\end{abstract}

\maketitle

\section{Introduction}
\bigskip

Let $K/\Q$ be a number field. Denote by $n_K=[K:\Q]$ its degree, by $d_K:= |disc(K/\Q)|$ its absolute discriminant, by $h_K$ its class number, by $R_K$ its regulator and by $\widetilde{K}$ its Galois closure over $\Q$. A classical problem in number theory, dating back to Gauss, is to understand how $h_K$ varies on varying $K$. If one uses analytic methods, it is more feasible to study the distribution of $h_K R_K$, which appears in the class number formula. Writing $\rho_K$ for the residue of the Dedekind zeta-function $\zeta_K(s)$ at $s=1$, the class number formula states that
\begin{equation*}
    \rho_K = \frac{2^{r_1}(2\pi)^{r_2} h_K R_K}{\mu_K\, \log \sqrt{d_K}},
\end{equation*}
where $r_1$ and $r_2$ are the number of real and complex embeddings of $K$ respectively and $\mu_K$ denote the number of roots of unity in $K$. Consequently, the asymptotic behaviour of $h_KR_K$ in a family is governed by that of $\rho_K$, which is amenable to analytic techniques. In the case of a family of quadratic fields $K_i/\Q$, Siegel \cite{Sie} proved that
\begin{equation}\label{BS-Siegel}
    \lim_{i\to\infty} \frac{\log h_{K_i}R_{K_i}}{\log \sqrt{d_{K_i}}} = 1.
\end{equation}
Thus,  $\log h_K R_K$ grows asymptotically like $\log \sqrt{d_K}$ over a family of quadratic fields. In particular, there are only finitely many quadratic fields with bounded $h_KR_K$, even though it is conjectured that $h_K=1$ for infinitely many real quadratic fields. This is famously known as the class number one problem. In the rest of the article, following the notation in \cite{TV}, we denote $g_K:=\log \sqrt{d_K}$.\\

Siegel's theorem was extended by Brauer in \cite{BS}, who showed that for any family of number fields $\{K_i\}$ with $K_i/\Q$ Galois for all $i$, if $d_{K_i}^{1/n_{K_i}} \to\infty$, then \eqref{BS-Siegel} holds. This was further extended to families of almost normal fields by Stark \cite{Stk} and Zykin \cite{Zyk}. Here, we call a number field $L$ almost normal if there is a tower
\begin{equation*}
    \Q=L_1\subsetneq L_2\subsetneq\cdots \subsetneq L_t=L,
\end{equation*}
such that $L_{i+1}/L_i$ is Galois for all $i$. For families of number fields $K$ with solvable Galois closure, \eqref{BS-Siegel} was proved by the author in \cite{Dixit} building on the work of V. K. Murty \cite{Km}. More recently Wong \cite{peng} proved \eqref{BS-Siegel} for suitable combinations of the above conditions.\\

In all the above cases, a crucial input is the existence of a zero-free region for $\zeta_K(s)$ in a neighbourhood of $s=1$. More precisely, Stark \cite{Stk} showed that if $K/\Q$ is a number field for which $\zeta_K(s)$ has a real zero $\beta$ satisfying
\begin{equation*}
    1 - \frac{1}{4 f(n_K) \log d_K} \leq \beta < 1,
\end{equation*}
then there exists a quadratic subfield $N\subset K$ such that $\zeta_N(\beta)=0$. This reduction to quadratic case is the key mechanism underlying existing proofs. Stark further showed that one may take $f(n_K)=1$ when $K/\Q$ is Galois, and $f(n_K)=4$ when $K/\Q$ is almost normal. For number fields whose Galois closure over $\Q$ is solvable,  V. K. Murty \cite{Km} showed that one can take 
\begin{equation*}
    f(n_K) = c n_K^{e(n_K)} \delta(n_K),
\end{equation*} 
where $e(n_K) = \max_{p^a \| n_K} a$ and $\delta(n_K) \ll n_K^4$ is an explicitly described function. These estimates are sufficiently strong to permit a reduction of such families to that of quadratic fields, where Siegel's theorem is applicable.
\medskip

For a general extension $K/\Q$, however, the only unconditional bound due to Stark \cite{Stk} allows one to take $f(n_K)= n_K!$, and grows too rapidly to yield \eqref{BS-Siegel}. However, under the Generalized Riemann Hypothesis,  $\zeta_K(s)$ has no zeros in the region $1/2<\Re(s)<1$ and hence we expect $\eqref{BS-Siegel}$ to hold in general. This leads to the Brauer-Siegel conjecture, which we refer to as \textit{(BS)} and state below.


\begin{conjecture}[BS]\label{BS-conj}
    Let $\{K_i\}$ be a family of number fields satisfying $d_{K_i}^{1/n_{K_i}} \to \infty$. Then,
        \begin{equation*}
            \lim_{i\to\infty} \frac{\log h_{K_i}R_{K_i}}{g_{K_i}} = 1.
        \end{equation*}
        Applying the class number formula, this is equivalent to 
        \begin{equation*}
            \lim_{i\to\infty} \frac{\log \rho_{K_i}}{ g_{K_i}} = 0,
        \end{equation*}
        where $\rho_{K_i}$ denotes the residue of $\zeta_K(s)$ at $s=1$.
\end{conjecture}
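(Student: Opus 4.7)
The plan is to work with the equivalent formulation in terms of the residue $\rho_K$ of $\zeta_K(s)$ at $s=1$, so the target becomes $\log\rho_{K_i}/g_{K_i}\to 0$. This splits naturally into an upper and a lower bound on $\log\rho_{K_i}$, and essentially all the difficulty is in the lower bound.

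For the upper bound $\limsup\log\rho_{K_i}/g_{K_i}\le 0$, I would invoke standard unconditional estimates such as those of Louboutin, which give something of the shape $\rho_K\ll \bigl(e\log d_K/(n_K-1)\bigr)^{n_K-1}$. Under the hypothesis $d_{K_i}^{1/n_{K_i}}\to\infty$ one has $n_{K_i}=o(g_{K_i})$, and a short calculation then yields $\log\rho_{K_i}\le o(g_{K_i})$ as required.

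For the lower bound $\liminf\log\rho_{K_i}/g_{K_i}\ge 0$, I would follow the Brauer--Stark strategy. The size of $\rho_K$ is controlled by the distance from $s=1$ of the largest real zero $\beta_K$ of $\zeta_K(s)$: using Hadamard-type bounds on the logarithmic derivative one gets $\log\rho_K\ge \log(1-\beta_K)-O(g_K/n_K)$, so the task reduces to showing $\log(1-\beta_{K_i})\ge -o(g_{K_i})$. To bound $\beta_{K_i}$ away from $1$, I would factor $\zeta_{\widetilde{K_i}}(s)$ into Artin $L$-functions and try to show, via character-theoretic arguments in the spirit of Stark and V.~K.~Murty, that any potential Landau--Siegel zero of $\zeta_{K_i}(s)$ must already be a zero of $\zeta_{N_i}(s)$ for some quadratic subfield $N_i\subset \widetilde{K_i}$. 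Siegel's theorem for quadratic fields then delivers the desired lower bound on $1-\beta_{K_i}$, provided $g_{N_i}=o(g_{K_i})$, which one would verify separately from the conductor-discriminant relation.

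The main obstacle is that this reduction is genuinely sensitive to the structure of $\mathrm{Gal}(\widetilde{K_i}/\Q)$. When the closure is Galois, almost normal, or solvable, the function $f(n_K)$ in Stark's inequality is known to grow at most polynomially in $n_K$, and the reduction succeeds. For an arbitrary family the only unconditional estimate is $f(n_K)=n_K!$, which is hopelessly weak, and there is no known purely algebraic mechanism that forces a Landau--Siegel zero of a general $\zeta_K$ down to a quadratic subfield. Short of assuming GRH (which trivializes the conjecture) or proving a new quasi-Riemann hypothesis ruling out Landau--Siegel zeros uniformly, I do not see how to close the gap in this generality; this is precisely why the conjecture remains open, and it motivates the descent framework of the present paper, which reverses the logic by taking (BS) as an input on an almost $S_n$ base family and transporting it upward.
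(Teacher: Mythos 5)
This statement is the Brauer--Siegel conjecture itself, which the paper presents as an open conjecture and makes no attempt to prove unconditionally; there is therefore no proof in the paper to compare against. You correctly recognize this. Your sketch of the two halves (the unconditional upper bound via Louboutin-type estimates together with $n_{K_i}=o(g_{K_i})$, and the Brauer--Stark reduction of a potential Landau--Siegel zero to a quadratic subfield where Siegel's theorem applies) accurately reflects the known strategy, and your identification of the obstruction --- that for arbitrary Galois closures the only unconditional constant is $f(n_K)=n_K!$, with no algebraic mechanism forcing the exceptional zero down to a quadratic subfield --- matches precisely the discussion in the paper's introduction and is the reason the conjecture remains open. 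Your closing observation that this motivates the paper's descent framework, which takes (BS) for an almost $S_n$ base family as input rather than trying to prove it from scratch, is also correct.
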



Recent years have seen substantial interest in various aspects of this conjecture, such as its effective versions, see for instance \cite{asif}. In this work, we focus on establishing a reduction-type result in this setting. More precisely, in all the cases where Conjecture \ref{BS-conj} is currently known, Siegel's theorem for quadratic fields plays an indispensable role. This leads to the question of whether the Brauer-Siegel property is preserved under quadratic extensions of number fields.


\begin{problem}
    Let $\{L_i\}$ be a family of number fields for which (BS) holds. For any $\{N_i\}$ with $[N_i:L_i]=2$, does (BS) hold for $\{N_i\}$?
\end{problem}

This problem may be viewed as a natural extension of Siegel's theorem, in which the base field is no longer fixed. It is also closely related to Stark's conjecture asserting that there are only finitely many $CM$-fields with bounded class number. In this paper, we answer this question in the affirmative when $L_i$'s are \textit{almost} $S_n$ number fields.

\medskip

\begin{definition}
    Let $L/\Q$ be a number field such that there exists an $S_n$-field $K/\Q$, satisfying $K\subset L\subset \widetilde{K}$ and $Gal(\widetilde{K}/L) \cong  S_m$ for some $m\leq n-1$. Then, we say that $L$ is an almost $S_n$-field.
\end{definition}


\begin{figure}
\begin{tikzpicture}[scale=1.1]

\node (Q)  at (0,0) {$\mathbb{Q}$};
\node (K)  at (0,2) {$K$};
\node (L)  at (0,4) {$L$};
\node (Kt) at (4,5) {$\widetilde{K}$};

\draw (Q) -- (K) -- (L);

\draw (Kt) -- node[midway, sloped, above] {$S_n$} (Q);
\draw (Kt) -- node[midway, sloped, above] { $S_{n-1}$} (K);
\draw (Kt) -- node[midway, sloped, above] {$S_m$} (L);

\end{tikzpicture}

\caption{}
\end{figure}

For instance all $S_n$-fields are almost $S_n$-fields with $m=n-1$ and this notion captures a much larger family of number fields.

\begin{theorem}\label{BS-almost-S_n}
   Let $\{L_i\}$ be a family of almost $S_{n_i}$-fields. Let $N_i/L_i$ be quadratic extensions satisfying $N_i\cap \widetilde{L_i} = L_i$ and 
   \begin{equation}\label{condition}
        \frac{\log d_{N_i}}{\log d_{L_i}} \to \infty.
   \end{equation}
    If Conjecture \ref{BS-conj} (BS) holds for the family $\{L_i\}$, then it also holds for the family $\{N_i\}$.
\end{theorem}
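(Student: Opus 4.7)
The natural starting point is to factor the Dedekind zeta function through the quadratic extension,
\begin{equation*}
\zeta_{N_i}(s) = \zeta_{L_i}(s)\,L(s,\chi_i),
\end{equation*}
where $\chi_i$ is the nontrivial quadratic Hecke character of $L_i$ cutting out $N_i/L_i$. Taking residues at $s=1$ yields $\rho_{N_i}=\rho_{L_i}L(1,\chi_i)$, so Conjecture~\ref{BS-conj} for $\{N_i\}$ is equivalent to
\begin{equation*}
\frac{\log\rho_{L_i}}{g_{N_i}}+\frac{\log L(1,\chi_i)}{g_{N_i}}\longrightarrow 0.
\end{equation*}

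The first summand is handled directly by (BS) for $\{L_i\}$: the conductor-discriminant formula yields $d_{L_i}^2\mid d_{N_i}$, so $g_{L_i}\leq g_{N_i}/2$, and condition~\eqref{condition} upgrades this to $g_{L_i}/g_{N_i}\to 0$. Writing $\log\rho_{L_i}/g_{N_i}=(\log\rho_{L_i}/g_{L_i})\cdot(g_{L_i}/g_{N_i})$ and invoking (BS) for $\{L_i\}$ in the first factor shows the product tends to $0$. The upper-bound half of the second summand is routine: standard convexity estimates for Hecke $L$-values give $\log L(1,\chi_i)\ll n_{L_i}\log\log d_{N_i}$, which combined with the Minkowski bound $n_{L_i}\ll \log d_{L_i}$ and hypothesis~\eqref{condition} is $o(g_{N_i})$.

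The crux is the corresponding lower bound $\log L(1,\chi_i)=-o(g_{N_i})$, that is, ruling out Siegel zeros of $L(s,\chi_i)$ near $s=1$. Suppose, for contradiction, that along a subsequence $L(s,\chi_i)$ admits a real zero $\beta_i$ with $1-\beta_i=o(1/g_{N_i})$. Zero-repulsion together with (BS) for $\{L_i\}$ excludes such zeros of $\zeta_{L_i}$, so $\beta_i$ is a zero of $\zeta_{N_i}$ but not of $\zeta_{L_i}$. The plan is to use the almost $S_{n_i}$-structure — namely the existence of an $S_{n_i}$-field $K_i$ with $K_i\subset L_i\subset\widetilde{K_i}$ and the resulting $S_{n_i}$-action on the Kummer generator $\alpha_i$ (with $N_i=L_i(\sqrt{\alpha_i})$) — to \emph{descend} $\beta_i$ to a zero of a Dirichlet $L$-function $L(s,\psi_i)$ attached to a quadratic Dirichlet character $\psi_i$ over $\Q$. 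A natural candidate is the character cutting out $\Q\bigl(\sqrt{N_{L_i/\Q}(\alpha_i)}\bigr)$ (possibly twisted by $\Q(\sqrt{\mathrm{disc}\,K_i})$), which lies inside $\widetilde{N_i}$ thanks to the hypothesis $N_i\cap\widetilde{L_i}=L_i$. If such a descent is achievable with only polynomial loss in the conductor, Siegel's classical theorem applied to the family $\{\psi_i\}$ of quadratic Dirichlet characters delivers the required contradiction.

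The main obstacle is making this descent rigorous. In general, the induced representation $\Ind^{G}_{H}(\chi_i)$, with $G=\mathrm{Gal}(\widetilde{N_i}/\Q)$ and $H=\mathrm{Gal}(\widetilde{N_i}/L_i)$, need not contain $\psi_i$ as a one-dimensional constituent (by Frobenius reciprocity, this would require $N_i$ to come from base change of a quadratic extension of $\Q$), so the zero $\beta_i$ does not transfer automatically. The descent therefore demands a careful Brauer-type analysis of $\Ind^{G}_{H}(\chi_i)$, exploiting the semidirect-product structure of $G$ as an extension of $S_{n_i}$ by the elementary abelian $2$-group $\mathrm{Gal}(\widetilde{N_i}/\widetilde{K_i})$. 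One must either exhibit, after tensoring or multiplying with sign characters of $S_{n_i}$, a one-dimensional piece that captures $\beta_i$ in $L(s,\psi_i)$, or else show that every higher-dimensional constituent has no hidden Siegel zero because its $L$-function factors through $S_{n_i}$-representations (and so is controlled by the solvable case of (BS)). This Stark-style zero reduction, adapted from the almost-normal setting to the relative $L$-function $L(s,\chi_i)$ over the varying base $L_i$, is the technical heart of the argument.
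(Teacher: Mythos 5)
Your high-level structure is correct: factor $\zeta_{N_i}=\zeta_{L_i}L(s,\chi_i)$, dispose of $\log\rho_{L_i}/g_{N_i}$ via (BS) for $\{L_i\}$ and condition~\eqref{condition}, note the upper bound on $\log L(1,\chi_i)$ is standard, and reduce to a lower bound for $L(1,\chi_i)$, i.e.\ ruling out a Siegel zero for the relative character. But the step you label ``the technical heart'' is exactly where your proposal has a genuine gap: you do not carry it out, and the route you sketch --- descending a potential zero of $L(s,\chi_i)$ to a quadratic Dirichlet $L$-function over $\Q$ and invoking Siegel's classical theorem --- is not what works, and you yourself note that the induced representation typically has no one-dimensional constituent of the required type. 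In general $N_i$ does not arise by base change from a quadratic extension of $\Q$, so there is no reason the zero transfers to a Dirichlet $L$-function.

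The paper's mechanism is different and entirely unconditional: one proves a \emph{zero-free region} for $\zeta_{N}/\zeta_{L}(s)$ of the form $1-\beta \gg (4^{m+2}m\log d_N)^{-1}$ (Theorem~\ref{zero-free-almost-S_n}), with no appeal to Siegel's theorem. The idea, following Hoffstein--Jochnowitz, is to use the hook-character decomposition of $S_m\times\Z/2\Z$ (Lemmas~\ref{both-zeros} and \ref{intermediate-existence}) to descend the hypothetical zero down a chain $L\subset L_1\subset\cdots\subset L_{g+1}$ inside $\widetilde{L}$ with explicitly bounded degree $[L_{g+1}:L]\leq 4^m$, until $L_{g+1}$ contains both $L$ and a Galois conjugate $L'$ of $L$. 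The hypothesis $N\cap\widetilde{L}=L$ then guarantees that $N_{g+1}=NL_{g+1}$ and its conjugate $N_{g+1}'$ are distinct conjugate quadratic extensions of $L_{g+1}$, so Lemma~\ref{biquad-lemma} (Siegel's biquadratic multiplicity trick) shows the zero would occur with multiplicity at least $2$ in $\zeta_{N_{g+1}N_{g+1}'}(s)$, contradicting Stark's theorem~\eqref{stark-region} that there is at most one (simple) exceptional zero. This replaces your nonconstructive descent to $\Q$ with a controlled descent \emph{within} $\widetilde{N}/\Q$, and delivers an explicit bound rather than an ineffective Siegel-type estimate.

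One further point: your case analysis is slightly off. If the exceptional zero $\beta_0$ of $\zeta_N$ is already a zero of $\zeta_L$, it cannot also be a zero of $L(s,\chi)$ (Stark's uniqueness), so the zero-free region is moot and one instead needs an explicit lower bound for $L(1,\chi)$ with no exceptional zero present; the paper applies Stark's lemma (Lemma~\ref{Stark-L(1,chi)}) with $\sigma_1=2$, and this is precisely where condition~\eqref{condition} is used, to kill the $d_L^{-1/2}$ factor relative to $g_{N}$. Your proposal does not cleanly separate these two cases nor identify where~\eqref{condition} enters.
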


{\bf Remark.} One may be tempted to extend the above result to families of almost normal fields over almost $S_n$-fields. Indeed, by Stark's result (see Theorem \ref{Stark-almost-normal}), it is possible to reduce the problem to a family of quadratic extensions over $\{L_i\}$. Unfortunately, condition \eqref{condition} is an obstruction here and prevents this generalization. However, this is not an issue in the modern formulation of the Brauer-Siegel theorem for asymptotically good families introduced by Tsfasman and Vl\u{a}du\c{t} in \cite{TV}. We discuss this below.
\medskip

For a number field $K/\Q$ and a prime power $q$, denote by $N_q(K)$ the number of ideals in $\rO_K$ with norm $q$. For a family of number fields $\rK=\{K_i\}$, we say that $\rK$ is asymptotically exact if the following limits hold.
\begin{equation*}
    \phi_{\R}(\rK) = \lim_{i\to\infty} \frac{r_1(K_i)}{g_{K_i}}, \, 
    \phi_{\C}(\rK) = \lim_{i\to\infty} \frac{r_2(K_i)}{g_{K_i}}\,\, \, \text{and}\,\,\,\,
    \phi_{q}(\rK) = \lim_{i\to\infty} \frac{N_q(K_i)}{g_{K_i}},
\end{equation*}
for all prime powers $q$. The generalized Brauer-Siegel conjecture, referred to as GBS, as formulated by Tsfasman and Vl\u{a}du\c{t} \cite{TV} is as follows. 
\begin{conjecture}[GBS]\label{BS}
For any asymptotically exact family $\mathcal{K}=\{K_i\}$,
\begin{equation*}\label{BS1}
\lim_{i\to\infty} \frac{\log h_{K_i}R_{K_i}}{g_{K_i}}=1 + \sum_q \phi_q \log\frac{q}{q-1} -\phi_{\mathbb{R}} \log 2 - \phi_{\mathbb{C}}\log 2\pi.
\end{equation*}
Using the class number formula, the above statement is equivalent to
\begin{equation*}\label{BS2}
\lim_{i\to\infty} \frac{\log \rho_{K_i}}{g_{K_i}} =  \sum_q \phi_q \log\frac{q}{q-1}.
\end{equation*}
\end{conjecture}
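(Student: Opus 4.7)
The plan is to analyze $\log \rho_{K_i}/g_{K_i}$ by separately bounding it from above and from below, then matching both sides. For the upper bound, I would invoke the Weil explicit formula for $\zeta_{K_i}(s)$ with a compactly supported test function. Normalizing by $g_{K_i}$ and passing to the limit along the asymptotically exact family yields the Tsfasman--Vl\u{a}du\c{t} basic inequality
\[
\limsup_{i \to \infty} \frac{\log \rho_{K_i}}{g_{K_i}} \,\leq\, \sum_q \phi_q(\mathcal{K}) \log \frac{q}{q-1},
\]
since the contributions of the nontrivial zeros are nonnegative while the archimedean and local factors converge to the quantities encoded in $\phi_{\R}$, $\phi_{\C}$, and the $\phi_q$. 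This direction requires no zero-free information and is, for asymptotically exact families, unconditional.

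The real content of the conjecture is the matching lower bound, namely that $\rho_{K_i}$ does not decay faster than $g_{K_i}$ on a logarithmic scale. Since $\rho_{K_i} = \lim_{s \to 1^{-}} (s-1)\,\zeta_{K_i}(s)$, any anomalous decay is detected by a real ``Siegel-type'' zero of $\zeta_{K_i}$ abnormally close to $s=1$. The classical device for excluding this is Stark's reduction: such a zero forces a zero of $\zeta_{N_i}$ for some quadratic subfield $N_i \subset \widetilde{K}_i$, whereupon Siegel's theorem closes the argument. Implementing this in the asymptotically exact setting requires a zero-free region of width at least $\gg 1/(f(n_{K_i})\log d_{K_i})$, with $f$ growing slowly enough that the extracted quadratic subfield still has $\log d_{N_i} \to \infty$. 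I would partition $\{K_i\}$ according to the structure of $\mathrm{Gal}(\widetilde{K}_i/\Q)$ and apply the best available zero-free region in each class: Stark's theorem for (almost) normal fields, V.~K.\ Murty's bound when $\widetilde{K}_i/\Q$ is solvable, and Theorem \ref{BS-almost-S_n} to handle almost $S_n$-fields and their quadratic extensions.

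The principal obstruction, and the reason GBS is only a conjecture, is what to do when $\mathrm{Gal}(\widetilde{K}_i/\Q)$ escapes all such structural regimes. Stark's unconditional bound $f(n_K) = n_K!$ grows far faster than $\log d_K$ and therefore fails to produce a quadratic subfield with diverging discriminant, so the Siegel zero cannot be ruled out without new input amounting essentially to an unconditional substitute for GRH at $s=1$. I would accordingly aim for the attainable version: assemble the Galois-theoretic decomposition above and establish GBS unconditionally on each subfamily for which a usable zero-free region already exists. The almost $S_n$ case, handled via Theorem \ref{BS-almost-S_n} and then extended to asymptotically good towers over such families, is the setting in which one can actually prove the conjecture; beyond it a genuinely new analytic input appears to be required.
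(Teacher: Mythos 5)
This statement is a conjecture (the Tsfasman--Vl\u{a}du\c{t} generalized Brauer--Siegel conjecture), so the paper contains no proof of it, and none is expected of you. Your assessment is accurate: the upper bound $\limsup_i \log\rho_{K_i}/g_{K_i} \leq \sum_q \phi_q \log\frac{q}{q-1}$ is indeed unconditional (it is the Tsfasman--Vl\u{a}du\c{t} basic inequality, which the paper cites and uses in the proof of Theorem \ref{asymp-good-theorem}), the entire difficulty lies in the matching lower bound, and the only accessible route is exactly the one you describe --- exclude Siegel-type zeros via Stark/Murty-style descent to a quadratic subfield, which works only when the Galois structure of $\widetilde{K}_i/\Q$ is constrained. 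Your proposed ``attainable version'' --- proving GBS on subfamilies where a usable zero-free region exists --- is precisely what the paper does in Theorems \ref{BS-almost-S_n} and \ref{asymp-good-theorem}; you have correctly identified both the scope of what is provable and the genuine obstruction (Stark's $f(n_K)=n_K!$ bound being far too weak in the general case) that keeps the full statement conjectural.
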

For an asymptotically exact family, if $\lim_i \frac{n_{K_i}}{g_{K_i}}>0$, we call it to be \textit{asymptotically good}. For instance, an infinite Hilbert class field tower $\{K_i\}$ over $K$, where the root discriminant remains a constant is an example of an asymptotically good family. Note that $\lim_i \frac{n_{K_i}}{g_{K_i}}=0$ is equivalent to the classical case that the root discriminant $d_{K_i}^{1/n_{K_i}} \to \infty$. For asymptotically good families, we obtain the following. 

\begin{theorem}\label{asymp-good-theorem}
    Let $\rL=\{L_i\}$ be an asymptotically good family satisfying GBS. For each $i$, suppose $L_i$ is an almost $S_{n_i}$-field, but not an $S_{n_i}$-field.  Let $\rM=\{M_i\}$ be an asymptotically good tower of number fields satisfying $M_i\cap \widetilde{L_i}=L_i$. Then, GBS holds for $\rM$ if for all $i$, either
    \begin{enumerate}
        \item $M_i/L_i$ is almost normal \,\,\,\, or 
        \item $M_i/L_i$ has solvable Galois closure. 
    \end{enumerate}
\end{theorem}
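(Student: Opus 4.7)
The plan is to establish the generalized Brauer--Siegel formula for $\mathcal{M}$ by separately controlling the residue $\rho_{M_i}$ from above and below. The upper bound
\begin{equation*}
\limsup_{i\to\infty}\frac{\log\rho_{M_i}}{g_{M_i}}\leq \sum_q \phi_q(\mathcal{M})\log\frac{q}{q-1}
\end{equation*}
is automatic for any asymptotically exact family, being a consequence of the Euler product and the generalized Mertens-type inequalities of Tsfasman--Vl\u{a}du\c{t} \cite{TV}; so the entire task is to obtain the matching lower bound. Via the class number formula, this reduces to excluding a Siegel-type real zero of $\zeta_{M_i}(s)$ that is too close to $s=1$ on the scale of $g_{M_i}$.

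The first step is to descend to a quadratic Artin L-function. Under hypothesis (1), iterating the relative form of Stark's theorem (Theorem \ref{Stark-almost-normal}) along an almost-normal tower $L_i = M_{i,0}\subsetneq\cdots\subsetneq M_{i,t_i}=M_i$ forces any sufficiently exceptional real zero $\beta$ of $\zeta_{M_i}$ to already be a zero of $\zeta_{E_i}$ for some quadratic extension $E_i$ of one of the $M_{i,j}$ contained in $M_i$. Under hypothesis (2), the analogous descent is supplied by V.~K.~Murty's theorem \cite{Km} applied inside the solvable Galois closure of $M_i/L_i$; the loss factor $f(n_{M_i/L_i})$ is polynomial in $n_{M_i/L_i}$, and this polynomial loss is harmless because $\mathcal{M}$ being asymptotically good forces $n_{M_i}$ to be comparable to $g_{M_i}$. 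In either case one obtains $L_i\subseteq F_i\subsetneq E_i\subseteq M_i$ with $E_i/F_i$ quadratic and $L(s,\chi_i)=\zeta_{E_i}(s)/\zeta_{F_i}(s)$ vanishing at $\beta$.

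Next, I would use the almost-$S_{n_i}$-but-not-$S_{n_i}$ structure of $L_i$, together with the disjointness $M_i\cap\widetilde{L_i}=L_i$, to pin down the nature of $\chi_i$. Since $L_i$ is not $S_{n_i}$, one has $\widetilde{L_i}=\widetilde{K_i}$ with $\mathrm{Gal}(\widetilde{L_i}/\mathbb{Q})\cong S_{n_i}$, whose only nontrivial one-dimensional character is the sign character, cutting out the unique quadratic subfield $\mathbb{Q}(\sqrt{\mathrm{disc}(K_i)})\subset\widetilde{L_i}$. By the same group-theoretic dichotomy used in the proof of Theorem \ref{BS-almost-S_n}, together with Stark's principle that only (at most) one-dimensional characters can contribute real zeros in the Siegel window, the offending $\chi_i$ either (a) is the pullback of the sign character of $\mathrm{Gal}(\widetilde{L_i}/\mathbb{Q})$, so that $\beta$ is a zero of a quadratic Dirichlet L-function over $\mathbb{Q}$, or (b) is a genuinely new quadratic character over $L_i$ whose vanishing forces $\zeta_{L_i}(\beta)=0$ via the factorization of $\zeta_{M_i}$.

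Finally, the argument closes via the two pillars. In sub-case (a), Siegel's classical theorem over $\mathbb{Q}$ yields the ineffective bound $1-\beta \gg_{\varepsilon} d_{K_i}^{-\varepsilon}$, and since asymptotic goodness of $\mathcal{M}$ implies $\log d_{K_i}\ll g_{M_i}$, this prevents $\beta$ from spoiling the lower bound on $\rho_{M_i}$. In sub-case (b), the hypothesis that GBS holds for $\mathcal{L}$ controls $\log\rho_{L_i}$, and the factorization $\rho_{M_i}=\rho_{L_i}\cdot\prod_j L(1,\chi_{i,j})$ along the descent tower, combined with standard zero-free region estimates for the remaining L-factors, transfers this to the required lower bound on $\log\rho_{M_i}/g_{M_i}$. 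The principal obstacle, I expect, will be in case (2): carrying out Murty's solvable reduction over the varying base $L_i$ while keeping the loss factor polynomial in $n_{M_i/L_i}$, and then matching this polynomial loss against $g_{M_i}$ by exploiting asymptotic goodness --- precisely the step where the explicit hypothesis \eqref{condition} was needed in Theorem \ref{BS-almost-S_n} but becomes unnecessary here.
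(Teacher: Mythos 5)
Your proposal has the right overall shape (upper bound from Tsfasman--Vl\u{a}du\c{t}, descent to a quadratic situation via Stark/Murty, close the lower bound), but it omits the central input of the paper's argument and replaces it with a case analysis in which both horns fail. The engine the paper actually uses is Theorem~\ref{zero-free-almost-S_n}: for a quadratic $N/L$ with $N\cap\widetilde{L}=L$ over an almost $S_n$-field $L$ with $\mathrm{Gal}(\widetilde{L}/L)\cong S_m$, the quotient $\zeta_N/\zeta_L$ is zero-free on $[1-(4^{m+2}m\log d_N)^{-1},1]$. This comes from running the biquadratic Siegel trick (Lemma~\ref{biquad-lemma}) inside the $S_m$-structure, and it is precisely what lets the argument close \emph{without} ever appealing to Siegel's theorem over $\mathbb{Q}$. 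Your proposal never invokes it. In its place you offer a dichotomy (a)/(b) that does not hold up. Case (a): the descent via Theorems~\ref{Stark-almost-normal}/\ref{Km} applied with base $L_i$ produces $N_i$ with $L_i\subseteq N_i\subseteq M_i$ and $[N_i:L_i]\leq 2$; when $N_i\neq L_i$, the hypothesis $M_i\cap\widetilde{L_i}=L_i$ forces $N_i\not\subset\widetilde{L_i}$, so the quadratic character of $N_i/L_i$ cannot factor through $\mathrm{Gal}(\widetilde{L_i}/L_i)$ and a fortiori cannot be the pullback of the sign character of $S_{n_i}$. Case (a) simply never occurs. Case (b): you assert that the vanishing of $L(s,\chi_i)$ ``forces $\zeta_{L_i}(\beta)=0$ via the factorization of $\zeta_{M_i}$''---this is a non sequitur; the factorization $\zeta_{N_i}=\zeta_{L_i}\,L(s,\chi_i)$ does not transfer a zero of $L(s,\chi_i)$ to $\zeta_{L_i}$. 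The correct dichotomy is: either $N_i=L_i$, in which case $\zeta_{L_i}(\beta)=0$ and GBS for $\mathcal{L}$ takes over, or $[N_i:L_i]=2$, in which case Theorem~\ref{zero-free-almost-S_n} gives the zero-free region.

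You also do not account for the ``not $S_{n_i}$'' hypothesis, which is essential: it forces $m\leq n_i-2$, hence $n_{M_i}\geq m(m+1)$, so that $4^{m+2}\ll 4^{\sqrt{n_{M_i}}}\ll 4^{C\sqrt{g_{M_i}}}$ by asymptotic goodness---exactly what is needed for $\log\theta_{M_i}/g_{M_i}\to 0$ once $\theta_{M_i}$ is chosen to match the zero-free region. Your remark about ``keeping the loss factor polynomial'' only addresses Murty's $\delta(n)$, not the dominant $4^{m+2}$ loss from Theorem~\ref{zero-free-almost-S_n}. Finally, the passage from zero-free region to the GBS lower bound is not a one-liner in the asymptotically exact setting: one must run the Tsfasman--Vl\u{a}du\c{t} decomposition of $\log\zeta_{M}(1+\theta_M)$ into $\log\rho_M$, $\log F_M(1+\theta_M)$, and $-\log\theta_M$, choose $\theta_M$ as a function of the zero-free region, and control $\log F_M(1+\theta_M)$ via the Lagarias--Odlyzko estimate. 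None of that appears in your sketch, and it is where the hypotheses are actually consumed.
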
  

The limitation of the above result is that it does not include family of $S_n$-fields, i.e., we force $m<n-1$. On the other hand, unlike the case of Theorem \ref{BS-almost-S_n}, we no longer require condition \eqref{condition} and hence have the result in more generality.




\bigskip
\section{Notation and Preliminaries}
\bigskip

In this section, we recall some facts and lemmata towards the proof of our main theorem. We first introduce some basic group theoretic constructions. Our method is inspired from that in Hoffstein and Jochnowitz \cite{Hoff}, where they establish a zero free region for $\zeta_K/\zeta_k(s)$, where  $K$ is a CM-field over a totally real $S_n$-field $k$. We largely maintain the notation in their paper.
\medskip

 Let $S_m$ denote the symmetric group of $m$ elements. It is well known that the irreducible complex characters of $S_m$ are in bijection with the partitions of $m$. Let $\chi_1,\chi_2,\cdots, \chi_m$ denote the irreducible characters of $S_m$ corresponding to the partitions 
\begin{equation*}
    [m], [m-1,1],[m-2,1,1],\ldots, [1,1,\ldots,1]
\end{equation*}
 respectively. These partitions are commonly referred to as hooks. The character $\chi_1$ is the trivial character, while $\chi_m$ is the sign character, which is the unique quadratic non-trivial character of $S_m$, for $m\geq 5$.
 \medskip

For $0\leq i\leq m$, let $\psi_i$ denote the character of the subgroup $S_{m-i}\times S_i \subset S_m$, that is trivial on the factor $S_{m-i}$ and equal to the alternating character on $S_i$. Let $\psi_i^*$ denote the character of $S_m$ induced from $\psi_i$. By an application of Littlewood-Richardson rule, one obtains the decomposition
\begin{equation}\label{psi_i-decomposition}
    \psi_i^* = \chi_i \oplus \chi_{i+1}.
\end{equation}

Next, let $\theta$ denote the unique non-trivial character of the cyclic group $\Z/2\Z$. Define the character $\psi_i' := \psi_i\otimes \theta$ on $S_m\times \Z/2\Z$. Similarly, define $\chi_i':=\chi_i\otimes \theta$ on $S_m\times \Z/2\Z$. Then, from \eqref{psi_i-decomposition}, one can deduce that
\begin{equation*}
    (\psi_i')^* = \chi_i'  \oplus \chi_{i+1}'.
\end{equation*}
\bigskip

We also recall some known results on the exceptional zeros of Dedekind zeta-functions. For a number field $K/\Q$, Stark \cite{Stk} showed that $\zeta_K(s)$ has at most one zero in the region
\begin{equation}\label{stark-region}
    1-\frac{1}{4\log d_K} \leq \Re(s) <1\,\,\, \text{and}\,\,\, |\Im(s)|\leq \frac{1}{4\log d_K}.
\end{equation}
If such a zero exists, it is necessarily real and simple. Here and in the rest of the paper, $\log$ is chosen with the principal branch. \\ 

We next recall two famous results of Stark \cite{Stk} and V. K. Murty \cite{Km}, which describe how the existence of such zeros in a narrow neighbourhood of $s=1$ can be reduced to a quadratic subextension.

\begin{theorem}\label{Stark-almost-normal}(Stark)
    Let $L/K$ be an almost normal extension, i.e.,
    \begin{equation*}
        K=L_0\subsetneq L_1\subsetneq L_2\subsetneq \ldots\subsetneq L_t=L
    \end{equation*}
    such that for all $1\leq i\leq t$, $L_i$ is Galois over $L_{i-1}$. Suppose $\zeta_L(\beta)=0$ for a real $\beta$ in the range
    \begin{equation*}
        1-\frac{1}{16 \log d_L} \leq \beta<1.
    \end{equation*}
    Then, there exists a field $N$ with $K\subseteq N\subseteq L$, such that $\zeta_N(\beta) = 0$ and $[N:K]\leq 2$.
    \end{theorem}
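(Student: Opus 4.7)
The plan is to prove the theorem by induction on the length $t$ of the almost-normal tower $K = L_0 \subsetneq L_1 \subsetneq \cdots \subsetneq L_t = L$, taking as a black box the Galois case $t=1$, which is Stark's original zero-propagation lemma for Galois extensions with the sharper constant $1/4$. A useful preliminary observation is that for any intermediate field $F$ with $K \subseteq F \subseteq L$, the tower formula $d_L = N_{F/\Q}(d_{L/F})\, d_F^{[L:F]}$ gives $d_F \leq d_L$, so the hypothesis $\beta \geq 1 - 1/(16 \log d_L)$ keeps $\beta$ inside the Galois-case range $[1 - 1/(4 \log d_F), 1)$ for every such $F$. This means the Galois lemma may be invoked at each level of the tower without losing control of $\beta$.

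For the inductive step I would first apply Stark's Galois lemma to the top extension $L/L_{t-1}$ (Galois by hypothesis), producing a field $M$ with $L_{t-1} \subseteq M \subseteq L$, $[M:L_{t-1}] \leq 2$, and $\zeta_M(\beta) = 0$. If $M \subsetneq L$, then $[M:K] < [L:K]$ and $K = L_0 \subsetneq \cdots \subsetneq L_{t-1} \subseteq M$ is still an almost-normal tower (since $M/L_{t-1}$ is Galois of degree at most $2$), so induction on $[L:K]$ applied to $M$ produces the required $N \subseteq M \subseteq L$ with $[N:K] \leq 2$ and $\zeta_N(\beta) = 0$.

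The remaining case is $M = L$, which forces $[L:L_{t-1}] \leq 2$. If $L = L_{t-1}$ the tower is strictly shorter and induction on $t$ applies directly. If $[L:L_{t-1}] = 2$, factor $\zeta_L(s) = \zeta_{L_{t-1}}(s)\, L(s, \chi, L/L_{t-1})$ with $\chi$ the nontrivial quadratic character of $\text{Gal}(L/L_{t-1})$. When $\zeta_{L_{t-1}}(\beta) = 0$ one inducts on the shorter tower $K = L_0 \subsetneq \cdots \subsetneq L_{t-1}$. Otherwise $L(\beta, \chi) = 0$ with $\zeta_{L_{t-1}}(\beta) \neq 0$, and one must descend the vanishing of this relative Artin $L$-function to a quadratic character defined over $K$. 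My plan here is to view $\Ind_{L_{t-1}/K}\chi$ as a virtual character of $\text{Gal}(\widetilde{L}/K)$, decompose it along the chain of normal subgroups corresponding to the almost-normal tower, and extract a one-dimensional constituent $\psi$ of order $\leq 2$ with $L(\beta, \psi) = 0$; the fixed field $N$ of $\ker \psi$ is then an extension of $K$ of degree at most $2$ lying inside $L$ with $\zeta_N(\beta) = 0$.

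The main obstacle I anticipate is precisely this last subcase: ensuring that the quadratic character detected on $\text{Gal}(L/L_{t-1})$ actually descends to a quadratic character of a subextension of $L/K$, rather than only to some larger subfield of $\widetilde{L}/K$. This is where the almost-normal hypothesis is used at every level, and it also explains the numerical loss from $1/4$ in the Galois case to $1/16$ here: the argument tolerates two successive applications of zero-propagation (the initial Galois step plus one further character-theoretic descent) before the zero-free region is exhausted, and each such step costs a factor of $2$ in the denominator of the Stark region.
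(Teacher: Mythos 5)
This is a theorem the paper cites from Stark \cite{Stk} and does not reprove, so there is no in-paper proof against which to match your argument; I will therefore evaluate your proposal on its own terms.

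Your inductive skeleton handles the easy reductions correctly: if the Galois lemma applied to $L/L_{t-1}$ produces $M\subsetneq L$, or if $\zeta_{L_{t-1}}(\beta)=0$, you can indeed descend to a strictly smaller almost-normal extension of $K$, and your observation that $d_F\leq d_L$ for every intermediate $K\subseteq F\subseteq L$ (via $d_L = N_{F/\Q}(d_{L/F})\,d_F^{[L:F]}$) keeps $\beta$ in range at each such step. But you yourself identify the one case that carries the entire content of the theorem, and your sketch for it does not work. In the case $[L:L_{t-1}]=2$, $L(\beta,\chi)=0$, $\zeta_{L_{t-1}}(\beta)\neq 0$, you propose to decompose $\Ind^{G}_{\mathrm{Gal}(\widetilde{L}/L_{t-1})}\chi$ (a representation of degree $[L_{t-1}:K]$) and extract a quadratic constituent $\psi$ whose fixed field lies in $L$ with $L(\beta,\psi)=0$. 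There is no reason for any of that to happen: such an induced representation need not have any one-dimensional constituents at all, a one-dimensional constituent need not be quadratic, a quadratic constituent's fixed field need not sit inside $L$, and even granting all this, forcing $L(\beta,\psi)=0$ from $L(\beta,\chi)=0$ would require holomorphy (Artin's conjecture) for the other constituents to rule out a pole--zero cancellation. Stark's actual mechanism here is different: one uses that $L_{t-1}/L_{t-2}$ is Galois to form the $\mathrm{Gal}(L_{t-1}/L_{t-2})$-conjugates of $\chi$ (equivalently of the quadratic extension $L/L_{t-1}$), notes that the conjugate Artin $L$-functions all vanish at $\beta$, and shows that if they were not all equal the compositum of the conjugate fields would have a multiple zero at $\beta$ in a region where it can have at most a simple one; this forces a Galois-invariance that lets the quadratic character descend.

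This also explains the constant, and exposes an internal inconsistency in your write-up. Your own preliminary $d_F\leq d_L$ shows that an induction confined to subfields of $L$ never leaves the Galois region $[1-\tfrac{1}{4\log d_L},1)$, so if your induction closed it would prove the theorem with $1/4$, not $1/16$. The degradation to $1/16$ is precisely the price of the step your proposal omits: the compositum of conjugate fields used in the hard case lies strictly above $L$ and has larger discriminant, and it is that enlargement --- not ``two successive applications of zero-propagation each costing a factor of $2$'' --- that forces the smaller zero-free window. Your heuristic for the constant is therefore not just vague but points in the wrong direction: it attributes the loss to iterating within $L$, where by your own lemma there is no loss, rather than to the unavoidable passage outside $L$.
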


\begin{theorem}\label{Km}(V. K. Murty)
Let $L/K$ be an extension of degree $n$ whose Galois closure is solvable. Let
\begin{align*}
e(n) & := \max_{p^\alpha || n} \alpha, \\
\delta(n) & := (e(n) + 1)^2 \, \, 3^{1/3} \, \, 12^{(e(n) -1)}.
\end{align*}
Then, there exists an absolute constant $c>0$, such that if $\zeta_L(s)$ has a real zero in the region
\begin{equation}\label{region}
1 - \frac{c}{n^{e(n)} \delta(n) \log d_L} \leq \beta < 1,
\end{equation}
then there exists a field $N$ with $K\subseteq N\subseteq L$, such that $\zeta_N(\beta) = 0$ and $[N:K]\leq 2$.
\end{theorem}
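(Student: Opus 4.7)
The plan is to factor $\zeta_L(s)/\zeta_K(s)$ into Artin $L$-functions for the solvable group $G=\mathrm{Gal}(\widetilde{L}/K)$, and to descend a putative real zero $\beta$ of $\zeta_L$ to a quadratic character of $G$ trivial on $H=\mathrm{Gal}(\widetilde{L}/L)$ by combining Heilbronn's positivity trick with induction along the derived series of $G$.

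Set $\widetilde{L}$ to be the Galois closure of $L$ over $K$. By the Artin formalism,
$$\zeta_L(s) = \zeta_K(s) \prod_{\chi \ne 1} L(s,\chi,\widetilde{L}/K)^{m_\chi}, \qquad m_\chi = \langle \mathrm{Ind}_H^G 1,\chi\rangle \ge 0,$$
the product ranging over non-trivial irreducible characters of $G$. Stark's zero-free region \eqref{stark-region} applied to $\zeta_K$ ensures $\zeta_K(\beta)\ne 0$ in the window specified by the theorem, so $\beta$ must be a zero of some $L(s,\chi,\widetilde{L}/K)$ with $\chi$ irreducible and $m_\chi \ge 1$; by Frobenius reciprocity this is equivalent to $\chi$ appearing in the permutation representation on $G/H$.

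I would then induct on the derived length of $G$. In the abelian base case $\chi$ is one-dimensional and $L(s,\chi,\widetilde{L}/K)$ is a Hecke $L$-function over $K$. Heilbronn's positivity argument, applied to the Dedekind zeta function $\prod_{k=0}^{m-1} L(s,\chi^k)$ of the cyclic abelian extension cut out by $\chi$ (where $m$ is the order of $\chi$), forces $m\le 2$ whenever $L(s,\chi)$ admits a real zero sufficiently close to $s=1$; hence $\chi$ is quadratic. Since $\chi$ appears in $\mathrm{Ind}_H^G 1$, Frobenius reciprocity gives $\ker\chi\supseteq H$, so the fixed field $N := \widetilde{L}^{\ker\chi}$ sits between $K$ and $L$, is quadratic over $K$, and satisfies $\zeta_N(\beta)=\zeta_K(\beta) L(\beta,\chi)=0$. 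In the inductive step, if $\chi$ does not factor through $G/[G,G]$, Clifford theory writes $\chi=\mathrm{Ind}_T^G \tilde\psi$ for a proper subgroup $T\supsetneq [G,G]$ and an irreducible $\tilde\psi$ of $T$; the induction identity $L(s,\chi,\widetilde{L}/K)=L(s,\tilde\psi,\widetilde{L}/\widetilde{L}^T)$ transfers the zero to an extension whose Galois group has strictly smaller derived length, where the inductive hypothesis applies. Keeping track of the containment relations with $H$ along the induction ensures that the quadratic field ultimately produced lies inside $L$ over $K$.

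The principal obstacle is maintaining quantitative control of the constant in the zero-free region through the induction. At each inductive step one pays a multiplicative factor tied to the degree $\chi(1)$ and the conductor $\mathfrak{f}(\chi)$, and the conductor-discriminant formula $\log d_L=\sum_\chi m_\chi\log \mathfrak{f}(\chi)$ is the bridge between $\log d_L$ and the quantities appearing in Heilbronn's inequality. For solvable groups of order dividing $n$, It\^o's theorem and related bounds on monomial character degrees yield $\chi(1)\le n^{e(n)}$, while the explicit numerics $3^{1/3}$ and $12^{e(n)-1}$ inside $\delta(n)$ reflect the effective form of Heilbronn's inequality applied with induced characters of degree up to $n^{e(n)}$ and the roughly $e(n)-1$ successive inductive passes required to descend from an arbitrary irreducible character of $G$ all the way to a quadratic one.
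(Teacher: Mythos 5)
The paper does not actually prove this statement: it is quoted, with the explicit zero-free window, from V.~K.~Murty's paper \cite{Km}, so there is no in-paper argument to measure your sketch against. Judged on its own terms, your outline has the right overall shape (Artin formalism, the Heilbronn--Stark positivity trick in the abelian case, descent through the solvable structure of $G=\mathrm{Gal}(\widetilde{L}/K)$), but it has genuine gaps at exactly the points where the theorem is hard. First, the inductive step via Clifford theory is incomplete: for a normal subgroup $N\trianglelefteq G$, the character $\chi$ is induced from the stabilizer of a constituent of $\chi|_{N}$ only when that restriction is \emph{not} homogeneous; in the homogeneous case $\chi$ need not be monomial at all (solvable groups are not $M$-groups in general, the smallest counterexample being $\mathrm{SL}_2(\mathbb{F}_3)$), and handling this case --- in Murty's treatment, by descending along a chief series whose factors are elementary abelian of order $p^{a}$ with $a\le e(n)$ --- is precisely where the exponent $e(n)$ enters. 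Second, your induction transfers the zero to an $L$-function of $\widetilde{L}/\widetilde{L}^{T}$, so the inductive hypothesis produces a quadratic extension of $\widetilde{L}^{T}$, not of $K$; the sentence about ``keeping track of the containment relations with $H$'' covers over the part of the argument that actually forces the final field $N$ to satisfy $K\subseteq N\subseteq L$ with $[N:K]\le 2$.

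Two further points. The quantitative content --- the specific constant $n^{e(n)}\delta(n)$ --- is asserted rather than derived: you never compare the discriminants (equivalently, conductors) of the intermediate fields arising in the descent with $d_L$, which is where the numerology $3^{1/3}\cdot 12^{e(n)-1}$ actually originates via the conductor--discriminant formula and Stark-type discriminant estimates; and the appeal to It\^o's theorem for $\chi(1)\le n^{e(n)}$ is off target, since $\chi(1)\le n$ trivially from $\chi$ occurring in $\mathrm{Ind}_H^G 1$. Finally, a small but real slip: Stark's region \eqref{stark-region} does \emph{not} guarantee $\zeta_K(\beta)\neq 0$; it permits one simple real zero there. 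The correct opening move is a dichotomy: if $\zeta_K(\beta)=0$, take $N=K$ (the conclusion allows $[N:K]\le 2$, hence $N=K$), and only otherwise pass to the quotient $\zeta_L/\zeta_K$.
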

\medskip

An important role in our proof is played by Stark's lemma, which relates the zero-free region of $\zeta_K(s)$ near $s=1$ to a lower bound on $\rho_K$. Let $\beta_0(K)$ denote the possible exceptional zero of $\zeta_K(s)$ in the region $\beta_0(K) \in [1-(4\log d_K)^{-1}, 1]$. Define 
\begin{equation}\label{beta_0-beta}
    \beta(K):= \max\left(\beta_0(K), 1-\frac{1}{4\log d_K}\right).
\end{equation}

\begin{lemma}\label{Stark's lemma}
    Let $\rho_K$ denote the residue of $\zeta_K(s)$ at $s=1$.  Then, there exists an absolute constant $c>0$ such that
    \begin{equation*}
        \rho_K > c\, (1-\beta(K)).
    \end{equation*}
\end{lemma}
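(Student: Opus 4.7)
The proof is classical and proceeds by a direct complex-analytic argument. The starting point is the partial-fraction identity
\[
-\frac{\zeta_K'(\sigma)}{\zeta_K(\sigma)} \;=\; \frac{1}{\sigma-1} \;-\; \sum_\rho \frac{1}{\sigma-\rho} \;-\; \tfrac{1}{2}\log d_K \;+\; O(n_K),
\]
where $\rho$ runs over the nontrivial zeros of $\zeta_K(s)$, combined with the positivity
\[
-\frac{\zeta_K'(\sigma)}{\zeta_K(\sigma)} \;=\; \sum_{\mathfrak a}\Lambda(\mathfrak a)\,N\mathfrak a^{-\sigma} \;\ge\; 0 \qquad (\sigma>1),
\]
and with the observation that $\operatorname{Re}\bigl(1/(\sigma-\rho)\bigr) \ge 0$ for $\sigma>1$ (since $\operatorname{Re}(\rho) \le 1$), so individual terms in the zero-sum can be dropped to produce one-sided inequalities.

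The argument splits into two cases according to the defining maximum for $\beta(K)$. First, when $\beta(K)=\beta_0(K)$ is a genuine exceptional zero of $\zeta_K(s)$, I would evaluate the identity above at $\sigma=1+\eta(1-\beta_0)$ for a small absolute constant $\eta>0$. Isolating the pole contribution at $s=1$ and the exceptional-zero contribution $1/(\sigma-\beta_0)$, discarding the remaining non-negative terms, and combining with the Laurent expansion $\zeta_K(s)=\rho_K/(s-1)+c_0(K)+O(s-1)$ evaluated at $s=\beta_0$ (where it vanishes) produces the desired lower bound $\rho_K>c(1-\beta_0)$. In the complementary case $\beta(K)=1-1/(4\log d_K)$, when $\zeta_K$ has no zero in the region \eqref{stark-region}, the same identity evaluated at $\sigma=1+1/\log d_K$ together with the Taylor expansion at $s=1$ yields $\rho_K\gg 1/\log d_K$, which is exactly $c(1-\beta(K))$ in this regime.

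The main obstacle is ensuring that the constant $c$ is genuinely absolute, independent of $K$. The delicate point lies in the sum over the nontrivial zeros, whose count in any fixed vertical strip grows like $\log d_K+n_K$ by the standard bound on $N(T;K)$; naively the sum could therefore dominate the main terms. The saving feature is precisely the non-negativity $\operatorname{Re}\bigl(1/(\sigma-\rho)\bigr)\ge 0$ for $\sigma>1$, which lets these terms be uniformly dropped in the direction we need. The remaining care is then purely bookkeeping: calibrating $\sigma$ so that the contributions of the pole, of the exceptional zero (when present), and of the $\log d_K$ term balance to give a lower bound of the correct order, and verifying that the $\Gamma$-factor and $O(n_K)$ contributions are uniformly controlled.
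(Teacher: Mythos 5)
The paper itself does not prove this lemma; it simply records it as a result of Stark (\cite{Stk}, Lemma~4), whose argument proceeds by a Mellin-transform (smoothed-sum) device: one writes
\[
\frac{1}{2\pi i}\int_{(2)}\zeta_K(s+\beta)\,\Gamma(s)\,x^s\,ds \;=\; \sum_{\mathfrak a}N\mathfrak a^{-\beta}e^{-N\mathfrak a/x}\;\geq\; e^{-1/x},
\]
shifts the contour to pick up the residue $\rho_K\Gamma(1-\beta)x^{1-\beta}$ at $s=1-\beta$ together with $\zeta_K(\beta)$ at $s=0$ (which is zero or negative by the choice of $\beta$), and bounds the shifted integral using the functional equation and convexity. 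Because $\Gamma(1-\beta)\asymp(1-\beta)^{-1}$, choosing $x$ to be a suitable absolute power of $d_K$ gives $\rho_K\gg 1-\beta$. This is the same mechanism the paper's Lemma~\ref{Stark-L(1,chi)} is built on. So your proposal follows a genuinely different route: the Hadamard factorization of $\zeta'_K/\zeta_K$ plus positivity rather than a smoothed sum.

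The difficulty is that your route, as described, has a gap precisely at the step that is supposed to deliver the conclusion. The explicit formula for $-\zeta'_K/\zeta_K(\sigma)$, combined with the positivity $\operatorname{Re}\,1/(\sigma-\rho)\geq 0$ for $\sigma>1$, yields after dropping terms only a one-sided inequality of the shape
\[
\frac{1}{\sigma-\beta_0}\;\leq\;\frac{1}{\sigma-1}+\tfrac12\log d_K+O(n_K),
\]
which constrains the \emph{location} of $\beta_0$ in terms of $\log d_K$; it says nothing directly about $\rho_K$. Your bridge to $\rho_K$ is the Laurent expansion $\zeta_K(s)=\rho_K/(s-1)+c_0(K)+O(s-1)$ ``evaluated at $s=\beta_0$.'' But $\zeta_K(\beta_0)=0$ gives $\rho_K=(1-\beta_0)\bigl(c_0(K)+O(\beta_0-1)\bigr)$, and to conclude $\rho_K\geq c(1-\beta_0)$ you would need the constant Laurent coefficient $c_0(K)$ (equivalently, $\rho_K$ times the Euler--Kronecker constant of $K$) to be bounded below by an absolute positive constant. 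No such bound is available; this coefficient varies wildly with $K$ and is not uniformly bounded away from zero. The same objection applies to your second case at $\sigma=1+1/\log d_K$: $\zeta_K(\sigma)>1$ gives $\rho_K\log d_K+c_0(K)+O(1/\log d_K)>1$, which is useless without control of $c_0(K)$. To repair the explicit-formula route one would have to integrate $\zeta'_K/\zeta_K$ against the Hadamard data and carefully balance the $\tfrac12\log d_K$ term against the full zero sum (a Deuring--Heilbronn-type computation), which is substantially more than what you sketch. The cleaner path is the Mellin-transform argument, where the positivity of the Dirichlet coefficients and the vanishing (or non-positivity) of $\zeta_K(\beta)$ make the bookkeeping automatic.
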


We also recall a lemma of Stark \cite[Lemma 5]{Stk} on lower bound of $L(1,\chi)$ for a quadratic character $\chi$.
\begin{lemma}\label{Stark-L(1,chi)}
    Let $N/L$ be a quadratic extension. Write $\zeta_N(s) = \zeta_L(s) L(s,\chi)$. Then, $L(s,\chi)$ has at most one zero $\beta_0$ in the interval $[1-(4\log d_N)^{-1}, 1]$. Let $\beta_{\chi} = \max(1-(4\log d_N)^{-1}, \beta_0)$. For any $\sigma_1 \in [1+ (4\log d_N)^{-1}, 2]$, we have
    \begin{equation*}
        L(1,\chi) > c \left(\frac{1-\beta_{\chi}}{\sigma_1 -1}\right) d_L^{-\frac{\sigma-1}{2}} \left(\frac{\sqrt{\pi}}{\Gamma(\sigma_1/2)}\right)^{r_1(L)}\left(\frac{2^{\sigma_1-1}}{\Gamma(\sigma_1}\right)^{r_2(L)} \left(\frac{\pi^{\frac{\sigma_1-1}{2}}}{\zeta(\sigma_1}\right)^{n_L},
    \end{equation*}
    where $c$ is an absolute constant, $r_1(L)$ and $r_2(L)$ are the number of real and complex embeddings of $L$ respectively.
    
\end{lemma}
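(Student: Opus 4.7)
The plan is to split the argument into two parts: (i) the zero-counting claim, and (ii) the quantitative lower bound on $L(1,\chi)$ via a Hadamard product argument applied to the completed Hecke $L$-function.

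The zero-counting claim is immediate from the factorization $\zeta_N(s)=\zeta_L(s)L(s,\chi)$ combined with Stark's result recalled in \eqref{stark-region}, which ensures that $\zeta_N(s)$ has at most one real zero in $[1-(4\log d_N)^{-1},1]$. Any zero of $L(s,\chi)$ in this region must therefore coincide with the unique exceptional zero of $\zeta_N$, and is accordingly real and simple.

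For the lower bound on $L(1,\chi)$, I would work with the completed Hecke $L$-function
\begin{equation*}
\Lambda(s,\chi)=\left(\frac{d_N}{d_L}\right)^{s/2}\bigl(\pi^{-s/2}\Gamma(s/2)\bigr)^{a}\bigl((2\pi)^{-s}\Gamma(s)\bigr)^{b}L(s,\chi),
\end{equation*}
where the discriminant exponent $d_N/d_L$ comes from the conductor-discriminant formula for the quadratic extension $N/L$, and $a,b$ are the archimedean exponents depending on the behavior of $\chi$ at each real and complex place of $L$. Since $\chi$ is non-trivial, $\Lambda(s,\chi)$ is entire of order one and admits a Hadamard factorization $e^{A+Bs}\prod_\rho(1-s/\rho)e^{s/\rho}$. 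Factoring out the potential exceptional zero as $L(s,\chi)=(s-\beta_\chi)H(s)$ with $H$ holomorphic and positive on $[\beta_\chi,2]$, evaluating at $s=1$ and $s=\sigma_1$ and dividing yields
\begin{equation*}
L(1,\chi)=L(\sigma_1,\chi)\cdot\frac{1-\beta_\chi}{\sigma_1-\beta_\chi}\cdot\frac{H(1)}{H(\sigma_1)},
\end{equation*}
so the factor $(1-\beta_\chi)/(\sigma_1-1)$ appearing in the claimed bound emerges directly. The remaining non-exceptional zeros of $L(s,\chi)$ each contribute a factor of shape $(\sigma_1-\rho)/(1-\rho)$, and after pairing complex conjugate zeros one verifies $|(\sigma_1-\rho)/(1-\rho)|\geq 1$, so they can only improve the lower bound. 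The archimedean factors $(\sqrt{\pi}/\Gamma(\sigma_1/2))^{r_1(L)}$ and $(2^{\sigma_1-1}/\Gamma(\sigma_1))^{r_2(L)}$ arise from manipulating the gamma factors in $\Lambda(s,\chi)$ using $\Gamma$-function identities, while the discriminant term $d_L^{-(\sigma_1-1)/2}$ comes from the power of $d_L$ used in normalizing the completion. Finally, a lower bound on $L(\sigma_1,\chi)$ from its Euler product, together with $\zeta_L(\sigma_1)\leq\zeta(\sigma_1)^{n_L}$, produces the factor $(\pi^{(\sigma_1-1)/2}/\zeta(\sigma_1))^{n_L}$.

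The main obstacle will be the careful bookkeeping of the archimedean contributions: one must track the local behavior of $\chi$ at each real and complex place of $L$ to recover the exact exponents $r_1(L)$, $r_2(L)$, and $n_L$ in the final inequality, and to correctly convert the factors coming from $\Gamma_N/\Gamma_L$-type ratios into the stated form. A secondary technical point is verifying that the product over non-exceptional zeros is genuinely bounded below by an absolute constant, which is a standard but delicate consequence of the Hadamard product and the functional equation, ultimately absorbed into the constant $c$.
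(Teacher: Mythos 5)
The paper does not prove this lemma; it is quoted directly from Stark \cite[Lemma 5]{Stk}, so there is no internal proof to compare against. Measured against what the statement actually says, however, your reconstruction has a genuine structural problem and does not recover the stated bound.

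The central difficulty is that you build the Hadamard argument around the completed Hecke $L$-function $\Lambda(s,\chi)$, whose conductor is $d_N/d_L$ (conductor--discriminant) and whose archimedean factor $\gamma_\chi$ has exponents determined by how the real places of $L$ split or ramify in $N$. An argument organised this way produces a bound whose discriminant factor is a power of $d_N/d_L$ and whose gamma-factor exponents depend on the number of ramified real places, neither of which matches the lemma: the statement has $d_L^{-(\sigma_1-1)/2}$ and clean exponents $r_1(L)$, $r_2(L)$, $n_L$, i.e.\ purely data of $L$. Your sentence claiming that ``$d_L^{-(\sigma_1-1)/2}$ comes from the power of $d_L$ used in normalizing the completion'' contradicts your own definition of $\Lambda(s,\chi)$, which uses $(d_N/d_L)^{s/2}$, not $d_L^{s/2}$. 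This is not a bookkeeping detail that washes out: in the application (proof of Theorem~\ref{BS-almost-S_n}) the entire argument hinges on the discriminant factor being $d_L$ and not $d_N/d_L$, since $\log d_L/\log d_N \to 0$ while $\log(d_N/d_L)/\log d_N \to 1$. A second, smaller error: for $\sigma_1>1$ and $\mathrm{Re}\,\rho<1$ one has $|1-\rho|<|\sigma_1-\rho|$, so the non-exceptional zeros each contribute a factor $|(1-\rho)/(\sigma_1-\rho)|<1$ to $H(1)/H(\sigma_1)$; they \emph{hurt} the lower bound rather than improve it, and controlling their product is exactly where the conductor re-enters and where the real work lies.

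The shape of the stated inequality, with every quantity attached to the base field $L$, points to the route Stark actually takes: write $L(1,\chi)=\rho_N/\rho_L$, obtain a lower bound for $\rho_N$ of the form $\rho_N \gg 1-\beta_\chi$ from Lemma~\ref{Stark's lemma} applied to $N$ (noting that when $L(s,\chi)$ does have a zero in $[1-(4\log d_N)^{-1},1]$, it is by uniqueness the exceptional zero of $\zeta_N$), and obtain an upper bound for $\rho_L$ from the completed zeta function $\xi_L(s)=s(s-1)d_L^{s/2}\gamma_L(s)\zeta_L(s)$. Since $\xi_L$ is increasing on $(1,\infty)$, one gets $\xi_L(1)\le\xi_L(\sigma_1)$, which unwinds to
\begin{equation*}
\rho_L \;\le\; \sigma_1(\sigma_1-1)\, d_L^{(\sigma_1-1)/2}\,\frac{\gamma_L(\sigma_1)}{\gamma_L(1)}\,\zeta_L(\sigma_1),
\end{equation*}
and after writing out $\gamma_L(\sigma_1)/\gamma_L(1)$ and using $\zeta_L(\sigma_1)\le \zeta(\sigma_1)^{n_L}$, the displayed combination of $d_L^{-(\sigma_1-1)/2}$, $\bigl(\sqrt{\pi}/\Gamma(\sigma_1/2)\bigr)^{r_1(L)}$, $\bigl(2^{\sigma_1-1}/\Gamma(\sigma_1)\bigr)^{r_2(L)}$ and $\bigl(\pi^{(\sigma_1-1)/2}/\zeta(\sigma_1)\bigr)^{n_L}$ falls out exactly. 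That is the argument you should be aiming to reconstruct; a Hadamard product built on $\Lambda(s,\chi)$ will not land on this form.

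Your treatment of the first assertion (at most one zero of $L(s,\chi)$ in the interval, via \eqref{stark-region}) is correct.
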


We now show that distinct conjugate quadratic fields gives rise to a biquadratic field whose zeta-function has zeros of higher multiplicity. This is the main idea of Siegel's proof for quadratic fields.
\begin{lemma}\label{biquad-lemma}
    Let $K$ be a number field and let $N_1$ and $N_2$ be quadratic extensions of $K$ that are not contained in $\widetilde{K}$. Suppose that $N_1$ and $N_2$ are Galois conjugates over $\Q$. Then, any zero $\beta$ of $\zeta_{N_i}(s)/\zeta_K(s)$ in the region
    \begin{equation*}
        \beta \in \bigg[1-\frac{1}{4\log d_{N_i}}, 1\bigg]
    \end{equation*}
    is a zero of $\zeta_{N_1 N_2}(s)$ with multiplicity at least $2$.
\end{lemma}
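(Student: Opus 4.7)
The plan is to follow Siegel's classical strategy: factor $\zeta_{N_1 N_2}(s)$ according to the characters of $\mathrm{Gal}(N_1 N_2/K)$ and exploit the symmetry between $N_1$ and $N_2$ coming from the Galois conjugation over $\Q$. Since the statement is vacuous when $N_1=N_2$, we may assume $N_1\neq N_2$; then $[N_1 N_2:K]=4$ and, because $N_1 N_2$ contains two distinct intermediate quadratic fields, the extension $N_1 N_2/K$ is automatically Galois with $\mathrm{Gal}(N_1 N_2/K)\cong \Z/2\Z\times\Z/2\Z$. Denote by $\chi_1,\chi_2$ the nontrivial quadratic characters of this group whose kernels fix $N_1$ and $N_2$ respectively, so that $\chi_1\chi_2$ corresponds to the third quadratic subfield $N_3$.

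The Artin decomposition for the Klein four group yields
\begin{equation*}
\zeta_{N_1 N_2}(s) \;=\; \zeta_K(s)\,L(s,\chi_1)\,L(s,\chi_2)\,L(s,\chi_1\chi_2),
\end{equation*}
together with $\zeta_{N_i}(s)=\zeta_K(s)\,L(s,\chi_i)$ for $i=1,2,3$. The crucial input is that $N_1$ and $N_2$ are isomorphic as abstract number fields (being Galois conjugate over $\Q$), so their Dedekind zeta functions coincide; dividing $\zeta_{N_1}(s)=\zeta_{N_2}(s)$ by $\zeta_K(s)$ forces $L(s,\chi_1)=L(s,\chi_2)$ as meromorphic functions on $\C$. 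Hence any zero $\beta$ of $\zeta_{N_1}(s)/\zeta_K(s)=L(s,\chi_1)$ is simultaneously a zero of $L(s,\chi_2)$. The classical non-vanishing $L(1,\chi)\neq 0$ for quadratic characters implies $\beta<1$, so $\zeta_K$ is holomorphic at $\beta$, while the entire function $L(s,\chi_1\chi_2)$ contributes a non-negative order of vanishing. Collecting contributions from the factorization,
\begin{equation*}
\mathrm{ord}_{s=\beta}\,\zeta_{N_1 N_2}(s) \;=\; \mathrm{ord}_{\beta}\zeta_K \,+\, \mathrm{ord}_{\beta}L(s,\chi_1) \,+\, \mathrm{ord}_{\beta}L(s,\chi_2) \,+\, \mathrm{ord}_{\beta}L(s,\chi_1\chi_2) \;\geq\; 0+1+1+0 \;=\; 2,
\end{equation*}
which is precisely the claim.

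The one genuinely subtle step is the identification $L(s,\chi_1)=L(s,\chi_2)$: although $\chi_1$ and $\chi_2$ are distinct characters of $\mathrm{Gal}(N_1 N_2/K)$, their $L$-functions must coincide because each is realized as the quotient $\zeta_{N_i}/\zeta_K$ of Dedekind zeta functions of isomorphic number fields. The remaining bookkeeping is a routine application of the Artin formalism for $V_4$-extensions. The hypothesis $N_i\not\subset \widetilde{K}$ is not essential for the internal logic of this lemma, but it ensures the availability of genuinely different Galois conjugates of a quadratic extension of $K$, which is what makes it possible to apply this zero-multiplicity input productively in the proof of the main theorem.
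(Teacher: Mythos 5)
Your argument is correct and follows the paper's proof exactly: pass to the biquadratic extension $N_1N_2/K$, factor via Artin formalism, use $\zeta_{N_1}=\zeta_{N_2}$ to conclude $L(s,\chi_1)=L(s,\chi_2)$, and then count orders of vanishing. The only small quibble is the phrase ``the statement is vacuous when $N_1=N_2$''---in that degenerate case the conclusion is actually false rather than vacuous (a simple exceptional zero of $\zeta_{N_1}$ need not have multiplicity $\geq 2$), so one must read the hypotheses as tacitly requiring $N_1\neq N_2$, which is exactly what the paper does implicitly by asserting $\mathrm{Gal}(N_1N_2/K)\cong(\Z/2\Z)^2$.
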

\begin{proof}
    By assumption, the compositum $N_1 N_2$ is a Galois extension of $K$ with Galois group $(\Z/2\Z)^2$. Let $\chi_1$ and $\chi_2$ denote the quadratic characters of $Gal(\overline{K}/K)$ corresponding to the extension $N_1/K$ and $N_2/K$. Then, we may write
    \begin{equation*}
        \zeta_{N_1}(s) = \zeta_K(s) L(s,\chi_1)\,\,\,\text{and}\,\,\, \zeta_{N_2}(s) = \zeta_K(s) L(s,\chi_2).
    \end{equation*}
    By Artin decomposition, we have
    \begin{equation*}
        \zeta_{N_1 N_2}(s) = \zeta_K(s) L(s,\chi_1) L(s,\chi_2) L(s,\chi_{12}),
    \end{equation*}
    where $\chi_{12}$ is the quadratic character corresponding to the third quadratic subextension of $N_1N_2/K$. Since $N_1$ and $N_2$ are conjugates, $\zeta_{N_1}(s) = \zeta_{N_2}(s)$ and hence $L(s,\chi_1) = L(s,\chi_2)$. Moreover, by \eqref{stark-region}, each $\zeta_{N_i}(s)$ has at most one zero in the region $[1-(4\log d_{N_i})^{-1}, 1]$. If such a zero arises from $\zeta_K(s)$, then the quotient $\zeta_{N_i}/\zeta_K(s)$ has no zeros in this region. On the other hand, if the zero arises from $L(s,\chi_i)$, it becomes a zero of multiplicity at least $2$ of $\zeta_{N_1N_2}(s)$.
\end{proof}

\bigskip

\section{Brauer-Siegel theorem for quadratic extensions over almost $S_n$ fields}
\bigskip



In this section, we prove Theorem \ref{BS-almost-S_n}. Let $K$ be an $S_n$-field and $L$ be an almost $S_n$-field satisfying $K\subset L \subseteq \widetilde{K}$, with $Gal(\widetilde{K}/L) \cong S_m$.
\smallskip

Let $R^*/R$ be an extension of number fields such that $L \subseteq R \subseteq R^*\subseteq \widetilde{L}$ and $Gal(R^*/R)\cong S_r$. Let $N/L$ be a quadratic extension such that $N\cap \widetilde{K} = L$. Write $M:= NR$ and $M^* = N R^*$ quadratic extensions over $R$ and $R^*$ respectively with $Gal(M^*/R)\cong S_r \times \Z/2\Z$. (see Figure 2.)
\medskip

Let $\chi_1,\chi_2,\ldots, \chi_r$ be the characters of $S_r$ associated to the hook partitions as described in the preliminaries. Denote by $\chi_i' := \chi_i\otimes \theta$, where $\theta$ is the non-trivial character of $\Z/2\Z$. Suppose that $t:=[R:L]$.

\begin{figure}
\begin{tikzpicture}[scale=1.1]

\node (L)   at (0,0) {$L$};
\node (R)   at (2,1) {$R$};
\node (Rs)  at (4,2) {$R^*$};
\node (K) at (0,-1) {$K$};
\node (Q) at (0,-2.5) {$\Q$};
\node (Lt) at (6,3) {$\widetilde{L}=\widetilde{K}$};
\node (N) at (-0.5,1.5) {$N$};
\node (M)  at (1.5,2.5) {$M$};
\node (Ms) at (3.5,3.5) {$M^* $};
\node (Nt) at (5.5,4.5) {$\widetilde{N}$};

\draw (K) -- node[midway, right]
  {$n$} (Q);
\draw (L) -- (K);
\draw (L) -- node[midway, sloped, above] {$t$} (R);
\draw (R) -- node[midway, right] {} (Rs);


\draw (Lt) -- node[midway, sloped, above]
  {} (Rs);


\draw (N) -- node[midway, sloped, above]
  {$2$} (L);


\draw (N) -- (M);
\draw (R) -- (M);
\draw (Ms) -- (Nt);
\draw (Lt) -- (Nt);

\draw (Ms) -- node[midway, sloped, above]
  {$ S_r$} (M);
\draw (Rs) -- (Ms);

\draw (Rs) -- node[midway, sloped, above]
  {$ S_r$} (R);

\draw[bend left=25] 
  (Lt) to node[midway, sloped, below]
  {$ S_m$} (L);
  
\draw[bend left=25] 
  (Lt) to node[midway, sloped, below]
  {$ S_n$} (Q);

\end{tikzpicture}
\caption{}
\end{figure}

\begin{lemma}\label{both-zeros}
 The Artin $L$-functions $L(s,\chi_1')$ and $L(s,\chi_r')$, cannot both have zeros in the interval 
 \begin{equation*}
     \bigg[1-\frac{1}{8tm\log d_N},1\bigg].
 \end{equation*}
\end{lemma}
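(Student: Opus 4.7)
The plan is to argue by contradiction: assume both $L(s,\chi_1')$ and $L(s,\chi_r')$ have real zeros $\beta_1,\beta_r$ in the interval $[1-1/(8tm\log d_N),1]$, and exhibit a Dedekind zeta function whose order of vanishing there exceeds what \eqref{stark-region} permits. The decisive character-theoretic observation is that, as one-dimensional characters of $S_r\times\Z/2\Z$, $\chi_1'\cdot\chi_r'=\chi_r\otimes 1$, so $\{\chi_1',\chi_r',\chi_r\otimes 1\}$ are the three non-trivial characters of the Klein-four quotient $(S_r\times\Z/2\Z)/(A_r\times\{1\})$. By the projection formula this translates to
\[
\chi_1'+\chi_r' \;=\; \Ind_{A_r\times\Z/2\Z}^{S_r\times\Z/2\Z}\bigl(\chi_1'|_{A_r\times\Z/2\Z}\bigr).
\]

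Let $E:=(M^*)^{A_r\times\{1\}}$, the unique quadratic subextension of $R^*/R$ cut out by the sign character of $Gal(R^*/R)$. The restriction $\chi_1'|_{A_r\times\Z/2\Z}$ is precisely the quadratic character of $Gal(M^*/E)$ whose fixed field is $NE$, so Artin formalism yields
\[
L(s,\chi_1')\,L(s,\chi_r') \;=\; \frac{\zeta_{NE}(s)}{\zeta_E(s)}.
\]
Under the contradiction hypothesis, the left-hand side vanishes to total order at least $2$ on the interval (two distinct simple zeros if $\beta_1\neq\beta_r$; a zero of order $\geq 2$ otherwise). Since $\zeta_E(s)$ has no poles on $(0,1)$, the same is true of $\zeta_{NE}(s)$, contradicting the uniqueness clause of \eqref{stark-region} for $\zeta_{NE}$, \emph{provided} the interval lies inside the Stark region $[1-1/(4\log d_{NE}),1]$, i.e.\ provided
\[
\log d_{NE} \;\leq\; 2tm\log d_N.
\]

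The bulk of the proof is therefore the discriminant estimate. Since $N\cap\widetilde L = N\cap\widetilde K = L$, the standard compositum-discriminant bound gives $d_{NE}\mid d_N^{[E:L]}\,d_E^{[N:L]} = d_N^{2t}\,d_E^{\,2}$, which reduces matters to showing $\log d_E\leq t(m-1)\log d_N$. This is a conductor-discriminant calculation inside the $S_m$-extension $\widetilde L/L$: the conductor of $E/R$ divides the relative discriminant of any degree-$r$ point-stabilizer subfield of $R^*/R$, which in turn is controlled by the Galois structure of $\widetilde L/L$, combined with the trivial tower bound $\log d_L \leq \tfrac12\log d_N$ coming from $d_N = d_L^{2}\,N_{L/\Q}(d_{N/L})$. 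The main obstacle I anticipate is precisely this discriminant bookkeeping: the character-theoretic reduction to $\zeta_{NE}/\zeta_E$ and the appeal to \eqref{stark-region} are clean, but extracting the sharp exponent $2tm$ requires carefully exploiting the $S_m$-hypothesis on $\widetilde L/L$; without it, the intermediate discriminants of subfields of $R^*/R$ cannot be uniformly controlled in terms of $d_N$ alone, and this is why the hypothesis that $L$ is almost $S_n$ is essential.
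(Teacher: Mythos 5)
Your approach is, at bottom, the same as the paper's: the field you call $NE$ is literally the paper's biquadratic extension $MM'$. Indeed, $Gal(M^*/NE)=Gal(M^*/N)\cap Gal(M^*/E)=(S_r\times\{1\})\cap(A_r\times\Z/2\Z)=A_r\times\{1\}$, and the paper's $MM'$ is likewise $(M^*)^{A_r\times\{1\}}$; your identity $L(s,\chi_1')L(s,\chi_r')=\zeta_{NE}/\zeta_E$ is just the paper's factorization $\zeta_{MM'}=\zeta_R\,L(s,\chi_1')L(s,\chi_r')L(s,\chi)$ after dividing by $\zeta_E=\zeta_R\,L(s,\chi)$, with $\chi=\chi_r\otimes 1$ the sign character cutting out $E$. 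So the character-theoretic reduction and the appeal to the uniqueness clause of \eqref{stark-region} are both exactly as in the paper. (One small slip: you define $E:=(M^*)^{A_r\times\{1\}}$ and in the same breath call it the quadratic subextension of $R^*/R$; the quadratic subextension is $(M^*)^{A_r\times\Z/2\Z}=(R^*)^{A_r}$, which is what you in fact use. The field $(M^*)^{A_r\times\{1\}}$ is the degree-$4$ biquadratic $NE$ itself.)

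The genuine gap is the discriminant bound, and you flag it yourself as ``the main obstacle''. You propose to estimate $d_{NE}$ via a compositum bound over $L$ and then separately bound $d_E$ by a conductor–discriminant calculation in $\widetilde L/L$, but you do not carry this out, and as stated the target $\log d_E\leq t(m-1)\log d_N$ is not established (one can get $\log d_E\ll tm\log d_L\leq tm\cdot\tfrac12\log d_N$ by the same device as below, but that only yields a slightly worse exponent $2t(m+1)$). The paper closes this much more directly and avoids any separate bound on $d_E$: by \cite[Lemma 7]{Stk} one has $d_{\widetilde N}\mid d_N^{m\cdot m!}$, and since $MM'\subset\widetilde N$ with $[\widetilde N:MM']=m!/(2t)$,
\begin{equation*}
d_{MM'}\;\leq\;\bigl(d_{\widetilde N}\bigr)^{1/[\widetilde N:MM']}\;\leq\; d_N^{\,m\cdot m!/[\widetilde N:MM']}\;=\;d_N^{2tm}.
\end{equation*}
Two zeros in $\bigl[1-(8tm\log d_N)^{-1},1\bigr]$ then force two zeros of $\zeta_{MM'}$ in $\bigl[1-(4\log d_{MM'})^{-1},1\bigr]$, contradicting \eqref{stark-region}. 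Adopting this single global step in place of your two-step $d_N^{2t}d_E^2$ estimate both closes the gap and yields the stated constant $8tm$.
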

\begin{proof}
    Recall that $\chi_1$ and $\chi_r$ correspond to the trivial and alternating characters of $S_r$. Thus, $\chi_1'$ and $\chi_r'$ are quadratic characters of $S_r\times \Z/2\Z$. Therefore, both $L(s,\chi_1')$ and $L(s,\chi_m')$ are entire. Moreover, it is easy to see that $\zeta_{M}/\zeta_R(s) = L(s,\chi_1')$ and there exists a quadratic extension $M'$, (given by the fixed field of the kernel of the quadratic character $\chi_m'$), such that $L(s,\chi_m) = \zeta_{M'}/\zeta_R(s)$. Furthermore, $M$ and $M'$ are both in $M^*\subset\widetilde{N}$. Now, considering the biquadratic extension $MM'/R$, we note that
    \begin{equation*}
        \zeta_{MM'}(s) = \zeta_R(s) L(s,\chi_1') L(s,\chi_m') L(s,\chi),
    \end{equation*}
    where $\chi$ is also a quadratic character. Hence, $L(s,\chi)$ is entire. By \eqref{stark-region}, $\zeta_{MM'}(s)$ has at most one zero in the region $[1-(4\log d_{MM'})^{-1},1]$. Using \cite[Lemma 7]{Stk}, we have that $d_{\widetilde{N}} \mid d_N^{m.m!}$. Therefore 
    \begin{equation*}
        d_{MM'} \leq (d_{\widetilde{N}})^{1/[\widetilde{N}: MM']}\leq d_N^{m.m!/[\widetilde{N}: MM']} \leq d_N^{2tm}.
    \end{equation*} 
    Hence,  we cannot have both $L(s,\chi_1')$ and $L(s,\chi_m')$ vanishing in the interval $[1-(8tm\log d_N)^{-1},1]$.
\end{proof}

\begin{lemma}\label{intermediate-existence}
    Let $R^*/R$ and $M^*/M$ be as above. If $\zeta_{M}/\zeta_R(s)$ has a real zero $\beta\in [1-(8tm\log d_N)^{-1},1]$, then there exists $R''$ satisfying $R\subset R'' \subset R^*$ with $Gal(R^*/R'')\cong S_{r-j}\times S_j$ for some $j\geq 1$ and $\zeta_{M''}/\zeta_{R''}(\beta)=0$, where $M'' = NR''$. (see Figure 3.)
\end{lemma}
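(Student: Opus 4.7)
The plan is to take $j=1$ and construct $R''$ explicitly, the key tool being the decomposition $\psi_1^{*} = \chi_1 \oplus \chi_2$ from \eqref{psi_i-decomposition}. Identifying $G := \operatorname{Gal}(M^*/R) \cong S_r \times \Z/2\Z$, the character $\chi_1'$ of $G$ (trivial on $S_r$, equal to $\theta$ on $\Z/2\Z$) has kernel $S_r \times \{1\}$ whose fixed field is $M$, so $L(s,\chi_1') = \zeta_M/\zeta_R(s)$ and the hypothesis reads $L(\beta,\chi_1')=0$.

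I then set $R'' := (M^*)^{H_1}$ with $H_1 := (S_{r-1} \times S_1) \times \Z/2\Z$. This places $R \subset R'' \subset R^*$ with $\operatorname{Gal}(R^*/R'') \cong S_{r-1} \times S_1$ as required, and $M'' := NR''$ is a quadratic extension of $R''$. Since $S_1$ is trivial, $\psi_1$ is the trivial character of $S_{r-1} \times S_1$, so $\psi_1' = 1 \otimes \theta$ is simply the character of $H_1$ that is trivial on $S_{r-1} \times S_1$ and non-trivial on $\Z/2\Z$. Its kernel $S_{r-1} \times S_1 \times \{1\} = H_1 \cap \operatorname{Gal}(M^*/M)$ has fixed field $R'' \cdot M = NR'' = M''$ (subgroup intersection corresponding to compositum of fixed fields), giving $L(s,\psi_1') = \zeta_{M''}/\zeta_{R''}(s)$.

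Finally, tensoring the identity $\psi_1^{*} = \chi_1 \oplus \chi_2$ with $\theta$ yields $(\psi_1')^{*} = \chi_1' \oplus \chi_2'$, and Artin's induction identity for $L$-functions then gives
$$\zeta_{M''}/\zeta_{R''}(s) \;=\; L(s,\psi_1') \;=\; L(s,\chi_1')\, L(s,\chi_2').$$
Specializing at $s=\beta$ kills the first factor by hypothesis, so $\zeta_{M''}/\zeta_{R''}(\beta)=0$, establishing the lemma with $j=1$. The only delicate point in the plan is identifying $\psi_1'$ with the quadratic character of $H_1$ corresponding to $NR''/R''$, which works cleanly precisely because $S_1$ is trivial; this is what makes $j=1$ the natural choice here, and Lemma \ref{both-zeros} does not appear to be required for this particular step (presumably being reserved for terminating the iterated descent in later arguments).
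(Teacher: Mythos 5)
Your identification of $R''$, the Galois-theoretic description of $M''$, and the identity $L(s,\psi_1') = \zeta_{M''}/\zeta_{R''}(s) = L(s,\chi_1')L(s,\chi_2')$ are all correct. The gap is in the final step: from $L(\beta,\chi_1')=0$ you conclude that the product $L(s,\chi_1')L(s,\chi_2')$ vanishes at $\beta$, but this requires knowing that $L(s,\chi_2')$ does not have a pole at $\beta$. The character $\chi_2$ is the standard representation of $S_r$, of degree $r-1$, so $\chi_2'$ has degree $r-1>1$ for $r\geq 3$; Artin's holomorphy conjecture is not assumed here, and Brauer's theorem only gives meromorphy. Since both $L(s,\chi_1')$ and the product are entire (they are abelian $L$-functions of quadratic characters), $L(s,\chi_2')$ is a ratio of two entire functions whose possible poles lie exactly at the zeros of $L(s,\chi_1')$ --- in particular, a pole at $\beta$ that cancels the zero is not ruled out. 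Hence $j=1$ need not work.

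This is precisely why the lemma only asserts existence of some $j\geq 1$ and why Lemma~\ref{both-zeros} is needed in its proof, not merely in later iterations. The paper's argument supposes, for contradiction, that none of the entire products $L(s,\chi_j')L(s,\chi_{j+1}')$, $j=1,\dots,r-1$, vanish at $\beta$. Then the zero of $L(s,\chi_1')$ at $\beta$ forces a compensating pole of $L(s,\chi_2')$, hence a zero of $L(s,\chi_3')$, and so on alternately, propagating a zero or pole of the same order to $L(s,\chi_r')$. But $L(s,\chi_r')$ is entire (the sign character twisted by $\theta$ is quadratic), so it has no pole, and Lemma~\ref{both-zeros} rules out a zero of $L(s,\chi_r')$ at $\beta$ in the given interval given that $L(s,\chi_1')$ already vanishes there. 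This contradiction produces the desired $j$, but gives no control over which $j$ it is. You should restore this chain argument and the appeal to Lemma~\ref{both-zeros}.
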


\begin{figure}
\begin{tikzpicture}[scale=1.1]

\node (L)   at (0,0) {$L$};
\node (R)   at (2,1) {$R$};
\node (Rz) at (4,2) {$R''$}; 
\node (Rs)  at (6,3) {$R^*$};
\node (Lt) at (8,4) {$\widetilde{L}=\widetilde{K}$};
\node (N) at (-0.5,1.5) {$N$};
\node (M)  at (1.5,2.5) {$M$};
\node (Mz) at (3.5, 3.5) {$M''$};
\node (Ms) at (5.5,4.5) {$M^* $};
\node (Nt) at (7.5,5.5) {$\widetilde{N}$};

\draw (L) -- node[midway, sloped, above] {$t$} (R);
\draw (R) -- node[midway, right] {} (Rz);

\draw (Lt) -- node[midway, sloped, above]
  {} (Rs);

\draw (Rs) -- node[midway, sloped, above]
  {$S_{r-j}\times S_j$} (Rz);

\draw (N) -- node[midway, sloped, above]
  {$2$} (L);

\draw (N) -- (M);
\draw (R) -- (M);
\draw (Ms) -- (Nt);
\draw (Lt) -- (Nt);

\draw (Ms) -- (Mz);
\draw (Mz) -- (M);
\draw (Rs) -- (Ms);
\draw (Rz) -- (Mz);

\draw[bend left=25] 
(Rs) to node[midway, sloped, below]
  {$ S_r$} (R);

\draw[bend left=30] 
  (Lt) to node[midway, sloped, below]
  {$ S_m$} (L);

\end{tikzpicture}
\caption{}
\end{figure}

\begin{proof}
    Let $\psi_i$ be the character of $S_{r-i}\times S_i$ which acts trivially on $S_{r-i}$ and alternatingly on $S_i$, as described in the preliminaries. Then, its induced character on $S_r$ satisfies $\psi_i^* = \chi_i + \chi_{i+1}$. Let $\psi_i'$ be the character on $S_{r-i}\times S_i\times \Z/2\Z$ defined by $\psi_i' = \psi_i\otimes \theta$, where $\theta$ is the non-trivial character on $\Z/2\Z$.\\
    
    Over the extension $R^*/R$, consider the fixed field of $S_{r-j}\times S_j$. Call it $F_j$. Since $\psi_j'$ is a quadratic character of $Gal(M^*/F_j)$, there exists a quadratic extension $N_j/F_j$ such that $\zeta_{N_j}/\zeta_{F_j}(s) = L(s, \psi_j')$. Since Artin $L$-functions are invariant under induction and $\psi_j^* = \chi_j + \chi_{j+1}$, we have
    \begin{equation*}
        L(s, \psi_j') = L(s,\chi_j') L(s, \chi_{j+1}').
    \end{equation*}
    Suppose none of the functions $L(s,\chi_1')L(s,\chi_2'), L(s,\chi_2')L(s,\chi_3'), \ldots, L(s,\chi_{m-1}')L(s,\chi_r') $ have a zero at $\beta$. Since $L(\beta, \chi_1')=0$, we must have poles and zeroes alternatively for $L(s,\chi_2')$, $L(s,\chi_3')$, $\ldots, L(s,\chi_r')$ at $s=\beta$. But, by Lemma \ref{both-zeros}, $L(s,\chi_r')$ can neither have a zero or pole at $\beta$. This leads to a contradiction. Hence, there exists a $j\geq 1$ for which $L(\beta, \psi_j')=0$.
\end{proof}

For a quadratic extension $N/L$, we now establish a zero free region for $\zeta_N/\zeta_L(s)$, which is at the heart of the proof of the main theorems. 


\begin{theorem}\label{zero-free-almost-S_n}
    Let $L$ be an almost $S_n$-field and $Gal(\widetilde{L}/L)\cong S_m$. Let $N/L$ be a quadratic extension satisfying $N\cap \widetilde{L} = L$. Then, any real zero $\beta$ of $\zeta_N/\zeta_L(s)$ satisfies
    \begin{equation*}
        1-\beta \geq \frac{1}{4^{m+2} m\log d_N}.
    \end{equation*}
\end{theorem}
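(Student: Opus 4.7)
The plan is to argue by contradiction using iterated descent via Lemma~\ref{intermediate-existence}. Suppose $\beta$ is a real zero of $\zeta_N/\zeta_L(s)$ with $1 - \beta < (4^{m+2} m \log d_N)^{-1}$. I would build a tower $L = R_0 \subsetneq R_1 \subsetneq \cdots \subseteq \widetilde{L}$ together with companion fields $R_k^* \subseteq \widetilde{L}$ satisfying $\mathrm{Gal}(R_k^*/R_k) \cong S_{r_k}$ and the invariant that $\zeta_{M_k}/\zeta_{R_k}(\beta) = 0$, where $M_k := N R_k$. Initialize $R_0 = L$, $R_0^* = \widetilde{L}$, $r_0 = m$, $t_0 = [R_0:L] = 1$; the invariant at step $0$ is exactly our hypothesis on $\beta$, interpreted through the identification $\zeta_N/\zeta_L(s) = L(s, \chi_1')$ from the lemma.

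At step $k$, assuming the invariant together with $t_k \leq 4^m$, the bound on $\beta$ yields $\beta \geq 1 - (8 t_k m \log d_N)^{-1}$, which is precisely the hypothesis of Lemma~\ref{intermediate-existence}. Applying it produces $R_{k+1} \subseteq R_k^*$ with $\mathrm{Gal}(R_k^*/R_{k+1}) \cong S_{r_k - j_{k+1}} \times S_{j_{k+1}}$ for some $j_{k+1} \in [1, r_k - 1]$, and $\zeta_{M_{k+1}}/\zeta_{R_{k+1}}(\beta) = 0$. I would then choose $R_{k+1}^*$ as the fixed field in $R_k^*$ of the larger of the two direct factors, so that $\mathrm{Gal}(R_{k+1}^*/R_{k+1}) \cong S_{r_{k+1}}$ with $r_{k+1} = \min(j_{k+1}, r_k - j_{k+1}) \leq r_k/2$. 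Since $[R_{k+1}:R_k] = \binom{r_k}{j_{k+1}} \leq 2^{r_k}$ and $r_i \leq m/2^i$ gives $\sum_{i \geq 0} r_i \leq 2m$, the invariant $t_k \leq 2^{2m} = 4^m$ is maintained throughout.

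The iteration terminates after at most $\lceil \log_2 m \rceil$ steps, when $r_{k^*} = 1$ forces $R_{k^*}^* = R_{k^*}$ and leaves a quadratic extension $M_{k^*}/R_{k^*}$ with $\zeta_{M_{k^*}}/\zeta_{R_{k^*}}(\beta) = 0$ and $t_{k^*} \leq 4^m$. Combining Stark's discriminant bound $d_{\widetilde{N}} \mid d_N^{m \cdot m!}$ from \cite[Lemma~7]{Stk} with $[\widetilde{N} : M_{k^*}] = m!/t_{k^*}$ gives $d_{M_{k^*}} \leq d_N^{m t_{k^*}} \leq d_N^{m \cdot 4^m}$.

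The hard part will be converting this into a genuine contradiction, since Stark's region~\eqref{stark-region} applied to $\zeta_{M_{k^*}}$ only designates $\beta$ as the unique exceptional zero and does not rule it out directly. I expect the resolution to be a biquadratic argument in the spirit of Lemma~\ref{biquad-lemma}: the Young subgroup choice at each descent step is non-canonical, so there are Galois-conjugate pairs $(R_{k^*}^\sigma, M_{k^*}^\sigma)$ all sharing the zero $\beta$. Over an appropriate common base, two distinct conjugates produce a biquadratic compositum whose Dedekind zeta-function acquires a zero of multiplicity at least $2$ at $\beta$, contradicting~\eqref{stark-region} once the corresponding discriminant estimate is incorporated. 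The extra factor of $4$ in $4^{m+2} m$ versus the $4^{m+1} m$ that a naive application of~\eqref{stark-region} to $\zeta_{M_{k^*}}$ would give should come from this biquadratic discriminant bound.
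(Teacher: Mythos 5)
Your proposal follows the same skeleton as the paper: iterated descent via Lemma~\ref{intermediate-existence}, the halving estimate $r_{k+1}\leq r_k/2$ giving $[R_{k^*}:L]\leq 4^m$, and the discriminant bound $d_{M_{k^*}}\leq d_N^{m\cdot 4^m}$ via Stark's $d_{\widetilde N}\mid d_N^{m\cdot m!}$ --- all of this matches the paper's computation. You also correctly recognize that the missing step is a biquadratic argument in the spirit of Lemma~\ref{biquad-lemma}, and your accounting of the constant (the extra factor of $4$ from $d_{N_1N_2}\leq d_{N_1}^4$) is right.

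The genuine gap is in the closing contradiction, which you flag as ``the hard part'' but do not carry out, and the mechanism you guess at is not quite the one that works. You suggest that the Galois conjugates arise from the non-canonicity of the Young-subgroup choices at each descent step, and that one should compare conjugate pairs $(R_{k^*}^\sigma, M_{k^*}^\sigma)$. That would put the two quadratic extensions over \emph{different} base fields, whereas Lemma~\ref{biquad-lemma} requires two distinct quadratic extensions $N_1,N_2$ of a \emph{single} common base. The paper instead uses the specific structure of the terminal field: after the descent terminates with $\mathrm{Gal}(L_g^*/L_{g+1})\cong S_a\times S_1$, one has $\mathrm{Gal}(\widetilde K/L_{g+1})\cong H\times S_1$ with $H\subseteq S_{n-2}$, and the extra $S_1$ factor means $L_{g+1}$ contains not only $L$ but also a Galois conjugate $L'$ of $L$. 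Writing $L=\Q(\alpha)$, $L'=\Q(\alpha')$, $N=\Q(\sqrt\alpha)$, and $N'=\Q(\sqrt{\alpha'})$, the compositum base is $L_{g+1}$ itself, and $N_{g+1}=NL_{g+1}$ and $N_{g+1}'=N'L_{g+1}$ are two distinct conjugate quadratic extensions of $L_{g+1}$ (distinctness uses the hypothesis $N\cap\widetilde L=L$). Lemma~\ref{biquad-lemma} then forces $\beta$ to be a double zero of $\zeta_{N_{g+1}N_{g+1}'}(s)$, contradicting Stark's uniqueness of the exceptional zero in~\eqref{stark-region} once the discriminant bound is inserted. So while your architecture and constants are right, the argument you would need to finish is not the one you sketch; the conjugate pair must live over the same terminal base $L_{g+1}$, and it is the $S_1$ factor in $\mathrm{Gal}(\widetilde K/L_{g+1})$ --- not the ambiguity in choosing Young subgroups --- that produces it.
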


\begin{proof}
Suppose $\beta $ is a real zero of $\zeta_N/\zeta_L(s)$ satisfying $\beta \in [1-(4^{m+2} m\log d_N)^{-1}, 1]$. By Lemma \ref{intermediate-existence}, there exists $L_1$ such that $L\subseteq L_1 \subseteq \widetilde{L} = \widetilde{K}$ and $Gal(\widetilde{L}/L)\cong S_{m-j}\times S_j$, satisfying $\zeta_{NL_1}/\zeta_{L_1} (\beta)=0$. Without loss of generality, assume $m-j<j$. Next, define $L_1^*$ to be the fixed field of $S_j$ in $\widetilde{L}/L$. Thus, we have $Gal(L_1^*/L_1) \cong S_{m-j}$. We apply the Lemma \ref{intermediate-existence} again and obtain that there exists $L_2$ such that $L_1\subseteq L_2 \subseteq L_1^*$ with $Gal(L_1^*/L_2)\cong S_{m-j-k}\times S_k$ and $\zeta_{NL_2}/\zeta_{L_2} (\beta)=0$. Again WLOG, $m-j-k<k$. We continue this process until we reach the group of the type $S_a\times S_1$. In the final step, we have the extension $L_g\subseteq L_{g+1}\subseteq L_g^*$ such that $Gal(L_g^*/L_{g+1}) \cong S_a \times S_1$.\\

Now, we also have $Gal(\widetilde{K}/L_{g+1})\cong H \times S_1$ where $H\subseteq S_{n-2}$. Hence, $L_{g+1}$ contains $L$ and a conjugate of $L$, say $L'$. Write $L=\Q(\alpha)$ and $L'=\Q(\alpha')$ such that $N= \Q(\sqrt{\alpha})$, where $\alpha$ and $\alpha'$ are conjugates. Call $N'=\Q(\sqrt{\alpha'})$. Then $N$ and $N'$ are conjugates. Moreover, $N_{g+1}= NL_{g+1}= L_{g+1}(\sqrt{\alpha})$ and $N_{g+1}'= NL_{g+1}'= L_{g+1}(\sqrt{\alpha'})$ are also conjugates. Applying Lemma \ref{biquad-lemma} and \eqref{stark-region}, we deduce that $\zeta_{N_{g+1}}(s)$ does not have any zero in the region
\begin{equation*}
    \bigg[1-\frac{1}{4\log d_{N_{g+1}N_{g+1}'}},1\bigg] \supset \bigg[1-\frac{1}{16\log d_{N_{g+1}}},1\bigg].
\end{equation*}
Write $t=[N_{g+1}:N]$. Applying \cite[Lemma 7]{Stk}, we have $d_{\widetilde{N}} \mid d_N^{m.m!}$ and hence $d_{N_{g+1}} \leq d_{\tilde{N}}^{1/[\widetilde{N}:N_{g+1}]} \leq d_N^{mt}$. Therefore, we have
\begin{equation}\label{eqn-1}
    1-\beta > \frac{1}{16mt \log d_N}.
\end{equation}
To get a bound on $[L_{g+1}:L]$, note that $[L_1:L]= \frac{m!}{(m-j)! j!}\leq 2^m$. Since $m-j<j$, in the second step, we have $[L_2:L_1]\leq \frac{(m-j)!}{(m-j-k)! k!} \leq 2^{m-j}\leq 2^{m/2}$. Similarly, in the third step, we have $[L_3:L_2]\leq 2^{m/4}$ and so on. Hence, 
$$
[L_{g+1}:L] \leq 2^{m+m/2+m/4+\cdots}\leq 4^m.
$$
Using this in \eqref{eqn-1}, we deduce $1-\beta > (4^{m+2}m \log d_N)^{-1}$, which contradicts our assumption.


\end{proof}
\medskip

\begin{proof}[Proof of Theorem \ref{BS-almost-S_n}]
For the family $\{N_i\}$, we would like to show that
$$
    \lim_{i\to\infty} \frac{\log \rho_{N_i}}{\log \sqrt{d_{N_i}}} = 0.
$$
It is known that (see for instance \cite[Chapter XVI, Lemma 1]{lang})
$$
    \limsup_{i\to\infty} \frac{\log \rho_{N_i}}{\log \sqrt{d_{N_i}}} \leq 0.
$$
Therefore, it suffices to prove that
$$
    \liminf_{i} \frac{\log \rho_{N_i}}{\log d_{N_i}} \geq 0.
$$
Let $N/L$ be a quadratic extension and let $L$ be an almost $S_n$-field. Suppose $\beta_0$ is the possible real zero of $\zeta_N(s)$ in the region $[1-(4\log d_N)^{-1},1]$. By Stark's Lemma \ref{Stark's lemma},
$$
    \rho_N \geq c(1-\beta),
$$
where $\beta=\max (1-(4\log d_N)^{-1}, \beta_0)$. Suppose $\beta_0$ is not a zero of $\zeta_L(s)$. Then, by Theorem \ref{zero-free-almost-S_n}, we deduce
$$
    \rho_N \geq c\, 4^{m} m \log d_N.
$$
In this case,
$$
    \frac{\log \rho_N}{\log d_N} \gg \frac{m}{\log d_N} + \frac{\log\log d_N}{\log d_N}.
$$
Since $d_{N_i}^{1/n_{N_i}} \to\infty$ and $m\leq n_{L_i}$, the RHS above tends to $0$ in the family and we have that (BS) holds. Now, suppose $\zeta_L(\beta_0)=0$. Then, we can write
\begin{equation*}
    \rho_N = \rho_L \, L(1,\chi),
\end{equation*}
where $L(s,\chi) = \zeta_N/\zeta_L(s)$. Since $\beta_0$ is not a zero of $L(s,\chi)$ and $\zeta_N(s)$ has at most one zero in $[1-(4\log d_N)^{-1}, 1]$, we have this region as a zero free region for $L(s,\chi)$. Applying Lemma \ref{Stark-L(1,chi)} with $\sigma_1=2$, we obtain 
\begin{equation*}
    L(1,\chi) > c_1 \, \frac{1}{4\log d_N} \, \frac{1}{d_L^{-1/2}}\, c_2^{n_L}, 
\end{equation*}
where $c_1$ and $c_2$ are absolute constants. Hence, 
\begin{equation*}
    \frac{\log L(1,\chi)}{\log d_N} \gg \frac{\log d_L}{\log d_N} + \frac{\log\log d_N}{\log d_N} + \frac{n_L}{\log d_N}.
\end{equation*}
Since $\log d_{L_i}/\log d_{N_i} \to 0$ and $n_{N_i}/{\log d_{N_i}} \to 0$, we deduce that $\liminf \frac{\log L(1,\chi)}{\log d_{N_i}} =0$. Therefore,
\begin{equation*}
    \liminf_i \frac{\log \rho_{N_i}}{\log \sqrt{d_{N_i}}} = \liminf_i \frac{\log \rho_{L_i}}{\log \sqrt{d_{N_i}}}.
\end{equation*}
By assumption, (BS) holds for $\{L_i\}$, and hence $\lim_i \log \rho_{L_i}/\log d_{L_i} = 0$. Since $\log d_{N_i}\geq \log d_{L_i}$, we deduce from above that $\liminf_i \frac{\log \rho_{N_i}}{\log \sqrt{d_{N_i}}} =0$ and (BS) holds for $\{N_i\}$.



\end{proof}

\bigskip

\section{Asymptotically good families}
\bigskip

Let $L$ be an almost $S_n$-field with $Gal(\widetilde{L}/L)\cong S_m$. Let $M/L$ be an extension satisfying $M\cap \widetilde{L}=L$. Suppose $M/L$ is either almost normal or has solvable Galois closure. Let
\begin{equation*}
    \beta_0 := \max\bigg(1-\frac{1}{32 g_M}, 1 - \frac{c}{n^{e(n)} \delta(n) g_M} \bigg),
\end{equation*}
where the second term is as in \eqref{region}. Recall that $g_M:=\log \sqrt{d_M}$. By Theorem \ref{Stark-almost-normal} and Theorem \ref{Km}, we deduce that if $\zeta_M(s)$ has a zero $\beta$ in the region $[1-\beta_0, 1]$, then there exists a field $N$ such that $L\subseteq N\subseteq M$, $\zeta_N(\beta)=0$ and $[N:L]\leq 2$. If $[N:L]=2$, by Theorem \ref{zero-free-almost-S_n}, we have that $\zeta_N/\zeta_L(s)$ does not vanish on the interval $[1-\frac{1}{4^{m+2} \log d_M}, 1]$. Hence, $\zeta_M/\zeta_L(s)$ does not vanish in the region $[1-\beta_1, 1]$, where
$$
    \beta_1 := \max\left(\beta_0, 1 - \frac{1}{4^{m+2} \log d_M}\right).
$$


\begin{proof}[Proof of Theorem \ref{asymp-good-theorem}]
Write
\begin{equation*}
\zeta_M (s)= \frac{\rho_M}{(s-1)} F_M(s).
\end{equation*}
Taking $\log$ on both sides and dividing by $g_M$, we get for $s = 1+\theta_M$
\begin{equation*}\label{main}
\frac{\log \zeta_M (1+\theta_M)}{g_M} = \frac{\log \rho_M}{g_M} + \frac{\log F_M(1+ \theta_M)}{g_M} - \frac{\log \theta_M}{g_M}.
\end{equation*}
In \cite{TV}, it is shown that for any asymptotically exact family of number fields,
\begin{equation*}
\limsup_{i\to\infty} \frac{\log \rho_{M_i}}{g_{M_i}} \leq \sum_q \phi_q(\rM) \log \frac{q}{q-1}.
\end{equation*}
In order to prove the Theorem \ref{asymp-good-theorem}, it suffices to show that for the asymptotically good tower $\mathcal{M} = \{M_i\}$ of number fields, there is a choice of $\theta_{M_i} \to 0$, such that

\begin{equation}\label{zeta}
\liminf_{i\to \infty} \frac{\zeta_{M_i}(1+\theta_{M_i})}{g_{L_i}} \geq \sum_q \phi_q(\rM) \log \frac{q}{q-1},
\end{equation}
\begin{equation}\label{F(s)}
\limsup_{i\to\infty} \frac{\log F_{M_i}(1+ \theta_{M_i})}{g_{M_i}} \leq 0,
\end{equation}
and
\begin{equation}\label{theta}
\lim_{i\to\infty} \frac{\log \theta_{M_i}}{g_{M_i}} = 0.
\end{equation}
We first prove \eqref{F(s)} and make our choice of $\theta_{L_i}$'s. Write
\begin{equation*}
    Z_M(s) : = \frac{d}{ds} \log F_M(s).
\end{equation*}
Using Mellin transform of the Chebyshev step function, we have
\begin{equation}\label{L-O}
\frac{Z_M(s)}{s} =  \int_{1}^{\infty} (G_M(x) - x) \, x^{-s-1} \, dx - \frac{1}{s},
\end{equation}
for $\Re(s) > 1$, where
\begin{equation*}
G_M(x) := \sum_{\substack{q,m>1 \\ q^m \leq x}} N_q(M) \log q.
\end{equation*}
The unconditional Lagarias-Odlyzko \cite{Lag} estimate for $G_M(x)$ gives
\begin{equation*}
|G_M(x) - x| \leq C_4 \, x \exp \left( -C_5 \sqrt{\frac{\log x}{n}}\right) + \frac{x^\beta}{\beta}
\end{equation*}
for $\log x \geq C_6 \, n_M \, g_M^2$, where $C_4$, $C_5$, $C_6$ are positive absolute constants. Here, $\beta = \max(\beta', 1- (4\log d_M)^{-1})$, where $\beta'$ is the possible real exceptional zero of $\zeta_M(s)$ in $[1- (4\log d_M)^{-1},1 ]$.
For an asymptotically good family $\rM=\{M_i\}$, we can find positive constants $C_0$ and $C_{00}$ depending on $\rM$ such that
\begin{equation*}
C_0 \, n_{M_i} \leq g_{M_i }\leq C_{00} \, n_{M_i}.
\end{equation*}
First assume that $\zeta_M(s)$ does not vanish in the interval $[1-\beta_1, 1]$. Then,
\begin{equation*}
|G_M(x) - x| \leq C_4 \, x \exp \left( -C_5 \sqrt{\frac{\log x}{n_M}}\right) + \frac{x^{\beta_1}}{\beta_1}.
\end{equation*}
For all $x$ satisfying $\log x > \max( 4^{m+2} g_M,\,\, n^{e(n)} \delta(n) g_M)$, we have
\begin{equation*}
    \frac{x^{\beta_1}}{\beta_1} \ll x \exp \left(-C_5' \sqrt{\frac{\log x}{n_M}}\right).
\end{equation*}
Setting $s = 1 + \theta$ in \eqref{L-O}, we have
\begin{equation*}
\left| \frac{Z_M(1+\theta)}{(1+\theta)} \right| = \left|  \int_1^\infty (G_M(x) - x) \, x^{-2-\theta} \, dx \right| +O(1)
\end{equation*}
\begin{equation}\label{integral}
=\left| \int_1^{I} (G(x)-x) x^{-2-\theta} dx +  \int_{I}^{\infty} (G(x) - x) x^{-2-\theta} dx \right| + O(1).
\end{equation}
In case of the first integral, we use the following bound on $G_M(x)$,
\begin{equation*}
G_M(x) = \sum_{\substack{q,m\geq 1\\ q^m\leq x}} N_q(M) \log q \leq n_M \sum_{\substack{q,m \geq 1\\ 
q^m \leq x}} \log q \ll g_M \, x \log x.
\end{equation*}
Therefore, for some constant $C_{11}>0$, we have
\begin{equation*}
|G_M(x) - x| \leq C_{11} \, g_M \, x \log x.
\end{equation*}
Thus the integral
\begin{align*}
 \int_1^{I} (G(x)-x) x^{-2-\theta} dx & \leq C_{11} g_M \int_1^{I} x^{-1-\theta} \log x dx \\
    &\leq C_{11} g_M \, I \left(1 - \exp\left(-\theta \left(I\right) \right)\right) \ll I g_M.\\
\end{align*}

We now bound the second integral in \eqref{integral}. By the Lagarias-Odlyzko estimate \eqref{L-O}, we have
\begin{equation}\label{bound3}
\int_{I}^{\infty} (G(x) - x) x^{-2-\theta} dx \leq C_4 \int_{I}^{\infty} \exp \left(-C_5' \sqrt{\frac{\log x}{g_M}}\right) x^{-1-\theta} dx.
\end{equation}
Using change of variables
\begin{equation*}
x = y^{g_M \log y},
\end{equation*}
we get the right hand side of \eqref{bound3} as
\begin{equation}\label{bound4}
 2C_4 \int_{\exp \left( \sqrt{\frac{\log I}{g_M}} \right)}^\infty \,\, g_M \,
y^{-C_5' - 1 - \theta g_M \log y} \log y \, dy.
\end{equation}
For large $g_M$ and any fixed $\epsilon>0$, we bound $\log y \leq y^{\epsilon}$ to get \eqref{bound4} to be
\begin{equation}\label{bound}
 \leq 2C_4 \int_{\exp \left( \sqrt{\frac{\log I}{g_M}} \right)}^\infty \,\, g_M \, y^{-C_5' - 1+\epsilon - \theta g_M \log y} dy.
\end{equation}
We further know that in the above interval,
\begin{equation*}
\log y \geq \sqrt{\frac{\log I}{g_M}}.
\end{equation*}
Hence, \eqref{bound} is
\begin{equation}\label{bound2}
\leq 2C_4 \int_{\exp \left( \sqrt{\frac{\log I}{g_M}} \right)}^\infty g_M\, y^{-C_5' - 1+\epsilon - \theta \alpha(g_M)} dy,
\end{equation}
where 
\begin{equation*}
\alpha(g_M) = \sqrt{\frac{\log I}{g_M}}.
\end{equation*} 
Evaluating the integral \eqref{bound2}, we have
\begin{equation*}
\frac{2C_4\, g_M}{C_5' - \epsilon + \theta \alpha(g_M)} \exp \left( - (C_5' -\epsilon + \theta \alpha(g_M)) \sqrt{\frac{\log I}{g_M}} \right).
\end{equation*}
We now choose $I = \max\left(4^{m+2}\log d_M,\,\, n_M^{e(n_M)} \delta(n_M) \,g_M\right)$. Here $\delta(n_M)\ll g_M^4$ and $e(n_M)\ll \log n_M\ll \log g_M$. Choosing $\theta_M$ as
\begin{equation*}
\theta_M := (I g_M)^{-1},
\end{equation*}
we get
\begin{equation*}
\log F_M(1+\theta_M)= \int_{0}^{\theta_M} Z_M(1+\theta) d\theta \ll 1.
\end{equation*}
Therefore, \eqref{F(s)} holds. Furthermore, we also have
$$
    n_M^{e(n_M)} \delta(n_M) \,g_M \ll g_M^{C \log g_M},
$$
for some absolute constant $C$ and since $m\leq n-2$, we have $n_M \geq m (m+1)$ and hence
$$
     4^{m+2}\ll 4^m \ll 4^{\sqrt{n_M}} \ll 4^{C_5 \sqrt{g_M}}.
$$ 
Therefore, 
\begin{equation*}
\frac{\log \theta_{M_i}}{g_{M_i}} =  O\left(\frac{\sqrt{g_M}}{g_M}\right) + O\left( \frac{(\log g_M)^2}{g_M}\right)
\end{equation*}
and hence $\lim_i \log \theta_{M_i} / g_{M_i} =0$. Hence, we also get \eqref{theta}. For \eqref{zeta}, we use the fact that $\{M_i\}$ is a tower of number fields. Note that
\begin{equation*}
\frac{\zeta_{M_i}(1+\theta)}{g_{M_i}} = \sum_q \frac{N_q(M_i)}{g_{M_i}} \log \frac{1}{1-q^{-1-\theta}}
\end{equation*}
\begin{equation*}
= \sum_p  \frac{N_p(M_i)}{g_{M_i}} \log \frac{1}{1-p^{-1-\theta}} + \sum_{q=p^{k}, k>1} \frac{N_q(M_i)}{g_{M_i}} \log \frac{1}{1-q^{-1-\theta}}
\end{equation*}
In a tower, we know that $\phi_p(\rM) \leq \frac{N_p(M_i)}{g_{M_i}}$. Therefore,
\begin{equation*}
\sum_p  \frac{N_p(M_i)}{g_{M_i}} \log \frac{1}{1-p^{-1-\theta}} \geq \sum_p \phi_p  \log \frac{1}{1-p^{-1-\theta}},
\end{equation*}
for any $\theta>0$. We also have
\begin{equation*}
\sum_{q=p^{k}, k>1} \frac{N_q(M_i)}{g_{M_i}} \log \frac{1}{1-q^{-1-\theta}} \to \sum_{q=p^{k}, k>1} \phi_q \log \frac{1}{1-q^{-1-\theta}} 
\end{equation*}
uniformly for $\theta > -\delta$, for some $\delta>0$. Hence, we get
\begin{equation*}
\liminf_{i\to \infty} \zeta_{M_i}(1+\theta_{M_i}) \geq \sum_q \phi_q \log \frac{q}{q-1}.
\end{equation*}
Suppose $\zeta_M(s)$ admits a zero in the region $[1-\beta_1, 1]$. Then, we have that $\zeta_L(\beta_1)=0$ and the Artin $L$-function $\zeta_M/\zeta_L(s)$ is non-vanishing in $[1-\beta_1, 1]$. We can again apply Lagarias-Odlyzko's estimate as above to deduce $\log \left(\frac{F_M}{F_L}(1+\theta_M)\right) \ll 1$  and hence
\begin{equation*}
    \log F_M(1+\theta_M) \ll \log F_L(1+\theta_M).
\end{equation*}
By our assumption, GBS holds for $\{L_i\}$ and hence 
\begin{equation*}
    \frac{\log F_{L_i}(1+\theta_{M_i}) - \log \theta_{M_i}}{g_{L_i}} \to 0.
\end{equation*}
Since $g_{M_i}\geq g_{L_i}$, we deduce 
\begin{equation*}
    \frac{\log F_{M_i}(1+\theta_{M_i}) - \log \theta_{M_i}}{g_{M_i}} \to 0.
\end{equation*}
Therefore, GBS holds for $\rM=\{M_i\}$.
\end{proof}

\bigskip

\section{Brauer-Siegel for $S_n$ fields under Artin's conjecture}
\bigskip

In \cite[Theorem 4]{Stk}, Stark showed that Conjecture \ref{BS-conj} (BS) holds under the assumption of Artin's holomorphy conjecture, which asserts that Artin $L$-functions are analytic on $s\in\C\setminus\{1\}$.
In this section, we explicitly capture the Artin $L$-functions, whose holomorphy near $s=1$ shall imply (BS) for a family of $S_n$-fields.
\medskip

Let $K/\Q$ be an $S_n$-field.  Write
    $$
        \zeta_K(s) = L(s, \Ind^{S_n}_{S_{n-1}} 1) = \zeta(s) L(s,\rho_{(n-1)}),  
    $$
where $\rho_{(n-k)}$ is the representation of $S_n$ corresponding to the partition $[n-k,1,1,\ldots, 1]$.  Denote by $L\subset \widetilde{K}$, the fixed field of $S_{n-4}\times S_2 \times S_2$. Then,
    \begin{equation*}
        \zeta_L(s)   = L(s,\Ind^{S_n}_{S_{n-4} \times S_2 \times S_2} 1).
    \end{equation*}
 We compute the decomposition of this representation. Since $S_{n-4}\times S_2 \times S_2$ corresponds to the partition $[n-4,2,2]$, the representation $\Ind^{S_n}_{S_{n-4} \times S_2 \times S_2}$ is a Young permutation module, whose irreducible components are precisely the Specht modules $S^{\mu}$ such that $\mu$ dominates $[n-4,2,2]$. The multiplicity of each of this irreducible representation is given by the Kostka number, which can be calculated by the number of Young tableaux with the given content $n-4$ one's, $2$ two's and $2$ three's. Using this, we have the decomposition  of the form
    \begin{equation*}
        \Ind^{S_n}_{S_{n-4}\times S_2 \times S_2} 1 = 1 \oplus 2 \rho_{(n-1)} \oplus \rho_{(n-2)} \oplus \rho_{(n-3)}.
    \end{equation*}
    Therefore,
    \begin{equation*}
        \zeta_L(s) = \zeta(s) L^2(s,\rho_{(n-1)}) L(s,\rho_{(n-2)}) L(s, \rho_{(n-3)}).
    \end{equation*}
Any real zero $\beta$ of $\zeta_K(s)$ with $1/2\leq \beta \leq 1$ must be  zero of $L(s,\rho_{(n-1)})$, since $\zeta(s)$ does not vanish in this region. Suppose Artin's conjecture holds true for $L(s,\rho_{(n-2)})$ and $L(s,\rho_{(n-3)})$. Then, they are entire and do not have pole at $\beta$. Thus, $\beta$ is a double zero of $\zeta_L(s)$. By Stark's theorem, there is at most one zero of $\zeta_L(s)$ in the region $(1- (4\log d_L)^{-1}, 1)$, which is simple. Thus, we conclude that
    \begin{equation*}
        1-\beta > \frac{1}{4\log d_L}.
    \end{equation*}
But
\begin{equation*}
        d_L = d_K^n  N_{K/\Q} (d_{L/K}) \leq d_K^n \prod_{p|d_K} (N_{K/\Q}\,\,\, p)^{[L:K]} \leq d_K^{n+ n(n-1)(n-2)(n-3)/4}\leq d_K^{n^4/4}.
\end{equation*}
Thus
    \begin{equation}\label{BS-AHC}
        1-\beta > \frac{1}{4\log d_L} \geq \frac{1}{4 n^4 \log d_K}.
    \end{equation}
Hence, we deduce the following proposition.




\begin{proposition}
    Let $\rK=\{K_i\}$ be a family of $S_{n_i}$-fields satisfying $d_{K_i}^{1/n_{K_i}} \to \infty$. Under the assumption of Artin holomorphy conjecture for $L(s,\rho_{n_i-2})$ and $L(s,\rho_{n_i-3})$ for all $i$'s, the Conjecture \ref{BS-conj} holds for $\rK$.
\end{proposition}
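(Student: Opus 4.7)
The proof assembles the ingredients already laid out in the excerpt together with Stark's Lemma and the standard upper bound for $\log \rho_K/g_K$. The plan is to promote the zero-free region \eqref{BS-AHC} for $\zeta_{K_i}(s)$ to a lower bound on the residue $\rho_{K_i}$, and then check that the resulting bound is compatible with (BS) once $d_{K_i}^{1/n_{K_i}} \to \infty$.

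First I would fix $i$ and write $K=K_i$, $n=n_{K_i}$, $L$ = fixed field of $S_{n-4}\times S_2\times S_2$ inside $\widetilde K$, exactly as in the excerpt. Let $\beta_0(K)$ be the possible exceptional real zero of $\zeta_K(s)$ in $[1-(4\log d_K)^{-1},1]$ and set $\beta(K) = \max(\beta_0(K), 1-(4\log d_K)^{-1})$ as in \eqref{beta_0-beta}. If $\beta_0(K)$ is genuinely an exceptional zero, then since Artin's holomorphy conjecture is assumed for $L(s,\rho_{n-2})$ and $L(s,\rho_{n-3})$, these factors are entire and nonzero at $\beta_0(K)$ (because $\zeta(s)$ does not vanish on $[1/2,1]$ and $\zeta_L(s)$ has at most one simple zero in $(1-(4\log d_L)^{-1},1)$ by Stark's result). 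The factorization
\[
\zeta_L(s) = \zeta(s)\, L^2(s,\rho_{(n-1)})\, L(s,\rho_{(n-2)})\, L(s,\rho_{(n-3)})
\]
then forces $\beta_0(K)$ to be a zero of $\zeta_L(s)$ of multiplicity at least $2$. Since Stark's zero-free region permits at most one, simple, zero in $(1-(4\log d_L)^{-1},1)$, we obtain a contradiction unless $\beta_0(K) \leq 1 - (4\log d_L)^{-1}$. Combined with the conductor-discriminant bound $d_L \leq d_K^{n^4/4}$ derived in the excerpt, this gives \eqref{BS-AHC}, namely
\[
1 - \beta(K) \,\geq\, \frac{1}{4 n^4 \log d_K}.
\]

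Next I would apply Stark's Lemma \ref{Stark's lemma}, which yields $\rho_K \geq c(1-\beta(K)) \gg (n^4 \log d_K)^{-1}$. Taking logarithms and dividing by $g_K = \log\sqrt{d_K}$ gives
\[
\frac{\log \rho_K}{g_K} \,\geq\, -\,\frac{4\log n}{g_K} \,-\, \frac{\log\log d_K}{g_K} \,+\, O\!\left(\frac{1}{g_K}\right).
\]
The hypothesis $d_{K_i}^{1/n_{K_i}} \to \infty$ is equivalent to $n_{K_i}/g_{K_i}\to 0$, hence in particular $\log n_{K_i}/g_{K_i} \to 0$ and $\log\log d_{K_i}/g_{K_i} \to 0$. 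Consequently $\liminf_i \log \rho_{K_i}/g_{K_i} \geq 0$.

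Finally, the standard upper bound $\limsup_i \log \rho_{K_i}/g_{K_i} \leq 0$ (see \cite[Ch.~XVI, Lemma~1]{lang}) provides the reverse inequality, and together they give $\lim_i \log \rho_{K_i}/g_{K_i} = 0$, which is equivalent to (BS) for the family $\rK$ via the class number formula. The only substantive step is the zero-free region argument already carried out in the excerpt; the rest is bookkeeping. There is no serious obstacle since the input estimate $1-\beta \gg (n^4 \log d_K)^{-1}$ loses only polynomial-in-$n$ factors, and the regime $n_{K_i}/g_{K_i}\to 0$ absorbs any such polynomial loss when taking the logarithm.
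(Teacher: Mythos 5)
Your proof is correct and follows essentially the same route as the paper: the AHC-derived zero-free region $1-\beta \gg (n^4\log d_K)^{-1}$, Stark's Lemma \ref{Stark's lemma} to convert this into a lower bound on $\rho_K$, the observation that $d_{K_i}^{1/n_{K_i}}\to\infty$ is equivalent to $n_{K_i}/g_{K_i}\to 0$, and the standard upper bound from \cite[Ch.~XVI, Lemma~1]{lang}. In fact your bookkeeping is slightly more careful than the paper's, which writes the lower bound as $\log\rho_{K_i}\geq \log c + \log(4n_i^4\log d_{K_i})$ with an evident sign slip; your version $\log\rho_K/g_K \geq -4\log n/g_K - \log\log d_K/g_K + O(1/g_K)$ is the correct statement, and the conclusion $\liminf_i\log\rho_{K_i}/g_{K_i}\geq 0$ follows as you say.
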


\begin{proof}
     It suffices to show that 
    \begin{equation*}
        \liminf_i \frac{\log \rho_{K_i}}{\log d_{K_i}} \geq 0.
    \end{equation*}
    By Lemma \ref{Stark's lemma}
    \begin{equation*}
        \rho_{K_i} \geq c (1-\beta_i),
    \end{equation*}
    where $\beta_i = \max(\beta_i', (4\log d_{K_i})^{-1}) $ and $\beta_i'$ denotes the possible real zero of $\zeta_{K_i}(s)$ in the region $[1-(4\log d_K)^{-1},1)$. By \eqref{BS-AHC}, we get
    \begin{equation*}
        \log \rho_{K_i} \geq \log c + \log (4n_i^4 \log d_{K_i}) \gg \log (\log^4 d_{K_i}),
    \end{equation*}
    which is sufficient to establish Conjecture \ref{BS-conj}.
\end{proof}
\bigskip


\section*{Acknowledgements}
I thank Prof. V. Kumar Murty for several helpful discussions.

\end{document}